\documentclass[onecolumn,draftclsnofoot]{IEEEtran}
\usepackage{amsmath,amsfonts,amsthm, cuted} 
\usepackage{comment}
\usepackage{setspace}
\usepackage{graphicx}
\usepackage{lipsum}
\usepackage{wasysym}
\usepackage{subcaption}
\usepackage{resizegather}
\usepackage{color}
\usepackage{enumitem}
\usepackage{float}
\usepackage[linesnumbered,ruled]{algorithm2e}

\newcommand{\ud}[1]{_\mathrm{#1}}
\newcommand{\up}[1]{^\mathrm{#1}}
\graphicspath{{figures/}}
\newtheorem{lemma}{Lemma}

\newtheorem{theorem}{Theorem}
\newtheorem{proposition}{Proposition}

\newcommand{\bd}{\mathbf}

\title{A Convex Cycle-based Degradation Model for Battery Energy Storage Planning and Operation}
\author{
	Yuanyuan Shi,
	Bolun Xu,
	Yushi Tan,
	and Baosen Zhang
	
	\thanks{The authors are with the Department of Electrical Engineering,
		University of Washington,
		Seattle, Washington 98125,
		(e-mail:\{yyshi, xubolun, ystan, zhangbao\}@uw.edu)}
}

\begin{document}
\maketitle
\begin{abstract}
		\label{sec:abstract}
A vital aspect in energy storage planning and
operation is to accurately model its operational cost, which
mainly comes from the battery cell degradation. Battery degradation
can be viewed as a complex material fatigue process that based
on stress cycles. Rainflow algorithm is a popular way for cycle
identification in material fatigue process, and has been extensively
used in battery degradation assessment. However, the rainflow
algorithm does not have a closed form, which makes the major
difficulty to include it in optimization. In this paper, we prove the
rainflow cycle-based cost is convex. Convexity enables the proposed
degradation model to be incorporated in different battery
optimization problems and guarantees the solution quality. We
provide a subgradient algorithm to solve the problem. A case
study on PJM regulation market demonstrates the effectiveness
of the proposed degradation model in maximizing the battery operating profits as well as extending its lifetime.
\end{abstract}

\section{Introduction}
\label{sec:intro}
Battery energy storage (BES) is becoming an essential resource in energy systems with high renewable penetrations. Applications of BES include peak shaving~\cite{dunn2011electrical}, frequency regulation~\cite{shi2016leveraging}, demand response~\cite{li2011optimal,li2016optimal}, renewable integration~\cite{bitar2011role}, grid transmission and distribution support~\cite{pandzic2015near}, and many others. Although each application has different objectives and constraints, a common theme among them is that accurately accounting for the \emph{operational cost of batteries} is of critical importance.

Battery operating cost is mainly caused by the degradation effect of repeated charging and discharging~\cite{zakeri2015electrical}. Battery cells typically reach end-of-life (EoL) if their capacity degrades beyond a certain minimum threshold~(for example, 80\% of original capacity)~\cite{vetter2005ageing}, after which cells can no longer perform as expected. The resulting cell replacement cost represents the predominate operating cost of batteries, because of high manufacturing price for most electrochemical battery cells~\cite{nykvist2015rapidly}. The goal of this paper is to present an \emph{electrochemically accurate} and \emph{tractable} battery degradation model that can be used in multiple applications.

To account for battery degradation, two main classes of models have been considered. The first kind of degradation model is based on battery charging/discharging \emph{power}~\cite{li2011optimal,shi2016leveraging,guo2013electricity}. For example, \cite{shi2016leveraging} used a linear degradation cost while considering peak shaving and regulation services, and~\cite{li2011optimal} adopts a general convex cost for battery demand response. This power-based model decouples the degradation cost in time, since the costs at each of the time steps are independent. It is easy to be incorporated in battery optimization.
However, the major concern of the power-based model is about its accuracy in capturing the actual degradation cost. For example, a Lithium Nickel Manganese Cobalt Oxide (NMC) battery has \emph{ten} times more degradation when operated at near 100\% cycle depth of discharge (DoD) compared to operated at 10\% DoD for the \emph{same} amount of charged power~\cite{ecker2014calendar}. Power-based degradation model fails to capture such cumulative effect of battery cell aging and may severely deviate the operation from optimal.

Therefore, we consider a more accurate model to account for battery degradation, which is called \emph{cycle-based} model. This model is based on fundamental battery aging mechanism. It views the battery degradation process as a complex material fatigue process~\cite{xu2016modeling} that based on stress cycles. Battery aging at each time is not independent, but closely related to accumulation of previous charging and discharging. 

To accurately identify cycles in an irregular battery state of charge~(SoC) profile, we adopt the rainflow cycle counting method. Rainflow algorithm has been extensively used for cycle identification in material fatigue analysis~\cite{rychlik1987new,downing1982simple,amzallag1994standardization} as well as battery degradation model~\cite{muenzel2015multi,dragivcevic2014capacity,musallam2012efficient}. Fig. \ref{fig:intro_rainflow} gives an example of rainflow cycle identification results of a battery SoC profile. The total battery degradation cost is modeled as the sum of degradation from all cycles. 
\begin{figure}[!ht]
		\centering
		\includegraphics[width=4 in]{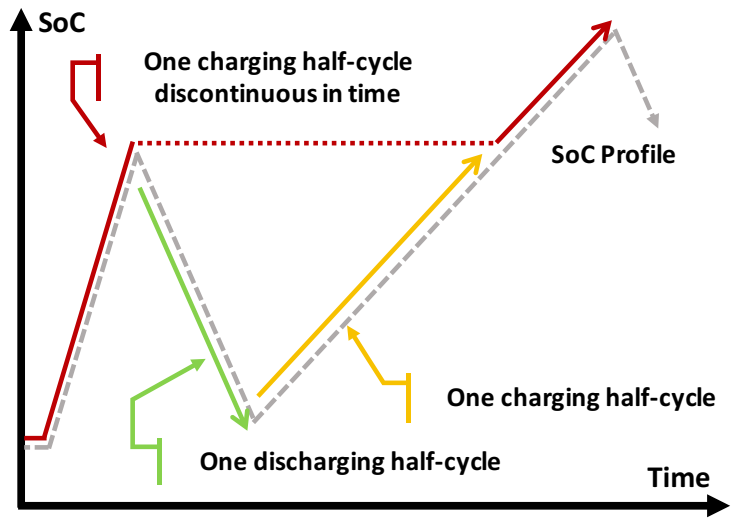}
		\caption{Rainflow cycle counting example. There are three half cycles, one charging half-cycle (the red one) is discontinuous in time. The green discharging half-cycle and yellow charging half-cycle are of the same cycle depth.}
		\label{fig:intro_rainflow}
\end{figure}

Cycle-based degradation model can accurately model the fundamental battery aging. However, it is considerably more cumbersome and computationally difficult than the power-based cost model. The rainflow algorithm, despite its wide array of applications, is a procedure that does not have a closed form. This makes it difficult to be incorporated in optimization problems. Most previous works (see~\cite{xu2016modeling} and references within) apply the rainflow algorithm for a posterior battery degradation assessment with fixed operating strategies.

Instead of posterior degradation assessment, we want to incorporate rainflow algorithm in battery optimization. Several efforts have been made by simplifying the algorithm procedure, either simplifying the rainflow counting procedure~\cite{he2016optimal,abdulla2016optimal}, or using more trivial cycle definition~\cite{koller2013defining,tran2013energy}. These approaches improve the problem solvability, however the simplifications introduce additional errors and sacrifices the optimality of solutions.

The main contribution of our paper is two-folded:
\begin{enumerate}
\item  We prove the rainflow cycle-based degradation model is \emph{convex}. Convexity enables the proposed cost model to be used in various battery optimization problems and guarantees the solution quality.
\item We provide a subgradient algorithm to solve the battery optimization problems.
\end{enumerate}

The proposed degradation model and subgradient solver algorithm have a broad application scope for battery operation optimization. We implement it for a case study on PJM regulation market, and verify this model accurately captures the battery cell aging, significantly improve the operational revenue (up to 30\%) and almost doubled the expected BES lifetime compared with previous degradation models.

The rest of the paper is organized as follows. Section \ref{sec:rainflow} describes the proposed rainflow cycle-based degradation model. Section \ref{sec:convexity} sketches the convexity proof. Section \ref{sec:subgradient} gives a subgradient algorithm. We provide a case study in Section \ref{sec:case} using real data from PJM regulation market, and demonstrate the effectiveness of the proposed degradation model in maximizing the BES operating profits as well as extending battery lifetime. Finally, Section \ref{sec:con} concludes the paper and outlines directions for future work.

\section{Model}
\label{sec:rainflow}
\subsection{Battery degradation model}
\label{sec:II-A}
We consider a battery operating in power markets and providing certain profitable service. We use $c(t)$ to denote battery charging and $d(t)$ for discharging at time $t$, where $t=1,2,...,T$. The battery SoC is $\mathbf{s} \in \mathbb{R}^T$, which is an accumulation of charged/discharged power. In Section \ref{sec:subgradient}, we will describe in detail how we model the battery operation and constraints. Here, we focus on degradation model.

Suppose the operation revenue from battery application is $R(\cdot)$, which is a function of battery power output. The battery degradation is captured by rainflow cycle-based degradation model. Therefore, a utility function $U(\cdot)$ that quantifies the profits BES owner obtains over $t \in [1, T]$ is,
\begin{align}
U = \underset{\text{}}{R(\mathbf{c}, \mathbf{d})} - \underset{\text{}}{\lambda\up{r} \sum_{i=1}^{N} \Phi(d_i)} \,,\label{eq:utility}
\end{align}
where $[d_1, d_2, ..., d_N]$ are cycle depths. We use rainflow algorithm for cycle identification, which is defined as below,
\begin{align}
[d_1, d_2, ..., d_N] & = \bd{Rainflow} (s(1),s(2), \dotsc, s(T))\,,
\label{eq:rainflow}
\end{align}

The second term in Eq. \eqref{eq:utility} together with Eq. \eqref{eq:rainflow} constitute the proposed \emph{rainflow cycle-based degradation model}, where $[d_1, d_2, ..., d_N]$ are cycle depths identified by rainflow algorithm. $\Phi(\cdot)$ is the DoD stress function, which defines the degradation of one cycle under reference condition.  $\sum_{i=1}^{N} \Phi(d_i)$ calculates the total degradation by summing over all cycles, and $\lambda\up{r}$ is the battery cell replacement price. It's worth to point out that DoD stress function can be fitted with test data of different types of batteries, which offers the proposed degradation model strong generalization ability.
%

\subsection{Rainflow cycle counting algorithm}
\label{sec:rainflow2}
The process of reducing a strain/time history into a number of smaller cycles is termed as cycle counting. Rainflow algorithm is the most popular cycle counting algorithm used for analyzing fatigue data~\cite{rychlik1987new,downing1982simple,amzallag1994standardization}. When used for battery life assessment, it takes a time series of battery's state of charge (SoC) as input, and identifies the depth of all cycles contained in this series. 

Fig. \ref{algorithm_rainflow} gives an example on the rainflow algorithm implementation for cycle counting of a battery SoC profile, and the cycle analysis results are reported in Table \ref{Sec2:T1}. The standard rainflow algorithm procedures based on paper~\cite{rychlik1987new} are given in Algorithm \ref{algorithm:rainflow}. 
\begin{figure}[!t]
	\centering
	\begin{subfigure}[b]{0.48 \columnwidth}
		\centering
		\includegraphics[width= \columnwidth, height= 0.9 \columnwidth]{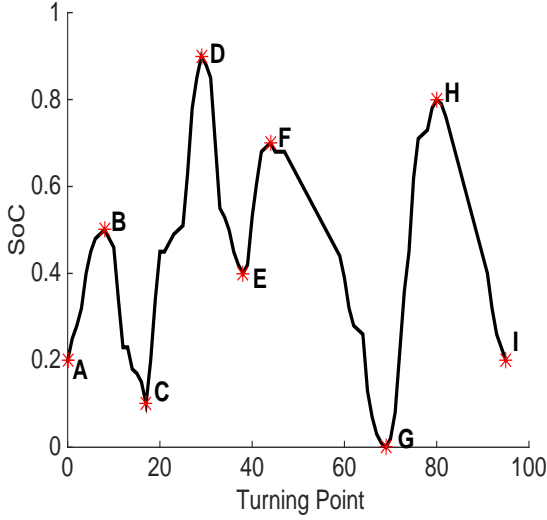}
		\caption[Network2]%
		{{\small Battery SoC time history. Red stars mark the local maximum and local minimum points}}
		\label{fig_soc}
	\end{subfigure}
	\hfill
	\begin{subfigure}[b]{0.48 \columnwidth}
		\centering
		\includegraphics[width=\columnwidth, height= 0.9 \columnwidth]{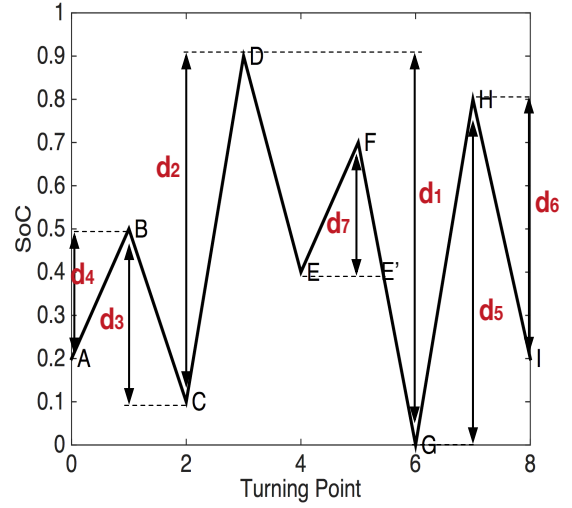}
		\caption[]%
		{{\small Rainflow cycle counting results, based on extracted local maximum and minimum points}}
		\label{fig_turning}
	\end{subfigure}

	\caption[]
	{\small Rainflow cycle counting algorithm procedures}
	\label{algorithm_rainflow}
\end{figure}
\begin{table}[!t]
	\centering
	\begin{tabular}{|c|c|c|c|c|c|c|c|}\hline
		Path & A-B & B-C & C-D & D-G & E-F-E' & G-H & H-I \\
		\hline
		SoC range &0.3 & 0.4 & 0.8 & 0.9 & 0.3 & 0.8 & 0.6 \\
		\hline
		Cycle & half & half & half & half & full & half & half \\
		\hline
	\end{tabular}
	\caption{Rainflow cycle counting results of the SoC profile in Fig. \ref{fig_soc}. The ``SoC range'' is calculated as the absolute difference between the cycle starting and ending SoC.}
	\label{Sec2:T1}
\end{table}

\begin{algorithm}
	\SetKwInOut{Input}{Input}
	\SetKwInOut{Output}{Output}
	\SetKwComment{Comment}{$\triangleright$\ }{}
	\Input{Battery SoC profiles, $\bd s \in \mathbb{R}^{T}$}
	\Output{Cycle counting results: $d_1, d_2, ...d_{N}$}
	
	Reduce the time history to a sequence of turning points -- local maximum and local minimum.
	
	Find the global maximum and global minimum, counted as a half cycle.
	
	If the global maximum happens first: \linebreak
	a. Half cycles are counted between the global maximum and the most negative minimum occurs before it, the most positive maximum occurring prior to this minimum, and so on to the beginning of the history.\linebreak
	b. Half cycles are also counted after the global minimum in the history and terminates at the most positive maximum occurring subsequently, the most negative minimum occurring after this maximum, and so on to the end of the history. \linebreak
	c. Remains are full cycles.
	
	If the global minimum happens first: \linebreak
	Adjust step 3a, 3b, 3c accordingly to pair up sequential most positive maximum and most negative minimum that happen prior to the global minimum, or after the global maximum. Remains are small full cycles.
	
	\caption{Rainflow Cycle Counting Algorithm}
	\label{algorithm:rainflow}
\end{algorithm}
Following the procedures in Algorithm \ref{algorithm:rainflow}, we count cycles in Fig. \ref{fig_soc}. First, all local extremes points of the SoC profile are extracted and arranged as Fig. \ref{fig_turning}. Then the deepest half cycle D-G is identified. Next, half cycles before D are identified, which are C-D($d_2$), B-C ($d_3$), A-B ($d_4$); also half cycles after G are counted, which are G-H ($d_5$) and H-I ($d_6$). Finally, there is one remaining full cycle E-F-E' with depth $d_7$, which could also be viewed as one charging half cycle plus one discharging half cycle with equal depth. 
\subsection{DoD stress function}
\label{sec:dod}
DoD stress function $\Phi(\cdot)$ is a critical part of the degradation model since it captures the cell aging caused by one cycle under reference conditions~\cite{xu2016modeling}. Different types of batteries may have different stress function forms. Some commonly used DoD stress functions are given below.

(1) Linear DoD stress model \cite{shi2016leveraging}, 
\begin{align*}
\Phi(d) = k_1 d\,,
\end{align*}
This linear DoD stress function is equivalent to the linear power-based degradation model. It is simple and suitable under some conditions. However, lab tests show that cycle DoD has a highly nonlinear impact on degradation under most conditions. The following two types of nonlinear DoD stress models are commonly used in literature.

(2) Exponential DoD stress model \cite{millner2010modeling}, 
\begin{align*}
\Phi(d) = k_2 d e^{k_3 d}\,,
\end{align*}

(3) Polynomial DoD stress model \cite{koller2013defining}, where
\begin{equation}
\Phi(d) =  k_4 d^{k_5}\,,
\label{eq:polynomial_dod}
\end{equation}
where all $k$'s are model coefficients, which could be estimated by fitting battery cycling aging test data. In paper, the DoD stress function is assumed to be a convex function, where a higher cycle DoD leads to a more severe damage.

\section{Convexity}
\label{sec:convexity}
The major difficulty of incorporating the rainflow cycle-based degradation model to optimization is that the rainflow algorithm does not have a closed form. It could be solved by some computer numerical methods, however the computational complexity is high and there is no guarantee for the solution quality. We prove that the rainflow cycle-based degradation cost $f(\mathbf{s})$, given a convex DoD stress function, is \emph{convex} in terms of $\mathbf{s}$. It helps to overcome the difficulty of incorporating the rainflow algorithm to battery optimization. We provide a sketch of the convexity proof below. A detailed version of the proof is given in Section \ref{sec:proof}.

\begin{theorem}{Rainflow cycle cost is convex}
	
	The rainflow cycle-based battery degradation model,
	\begin{align*}
	f(\mathbf{s}) = \sum_{i=1}^{N} \Phi(d_i),\ \ \text{where\ } [d_1, d_2,...,d_{N}] = \bd {Rainflow} (\mathbf{s})\,,
	\end{align*} 
	is convex. That is, $\forall \mathbf{s}_1, \mathbf{s}_2 \in \mathbb{R}^{T}$,
	\begin{small}
	\begin{equation}
		f\left(\lambda \mathbf{s_1} + (1-\lambda) \mathbf{s_2}\right) \leq \lambda f( \mathbf{s_1}) + (1-\lambda) f(\mathbf{s_2}), \forall \lambda \in [0,1]
	\end{equation}
	\end{small}
	\label{theo1}
\end{theorem}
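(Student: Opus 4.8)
The plan is to write $f$ as a pointwise maximum of a \emph{fixed} finite family of convex functions; convexity of $f$ then follows at once, since a pointwise maximum of convex functions is convex. Throughout I use that every DoD stress function in Section~\ref{sec:dod} is convex, vanishes at $0$, and is therefore nondecreasing and \emph{superadditive} on $[0,\infty)$ (i.e.\ $\Phi(a)+\Phi(b)\le\Phi(a+b)$); consequently $x\mapsto\Phi(|x|)$ is convex on $\mathbb{R}$, and for any fixed indices $i,j$ the map $\mathbf{s}\mapsto\Phi(|s_i-s_j|)$ — being $\Phi(|\cdot|)$ precomposed with a linear form — is convex in $\mathbf{s}$.

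The bookkeeping step: a rainflow output is just a way of organizing the turning points of $\mathbf{s}$ into half- and full-cycles, which I would encode abstractly as a \emph{template} — a non-crossing collection of ``legs'' $(i,j)$, grouped into one residual chain plus nested closed cycles on the index set $\{1,\dots,T\}$ — each leg contributing $\Phi(|s_i-s_j|)$ to a cost $\mathrm{cost}_P(\mathbf{s})$ (a full-cycle leg counted with multiplicity two, unused indices contributing $\Phi(0)=0$). The crucial point is that the set $\mathcal{P}$ of all templates depends only on $T$, not on the values of $\mathbf{s}$, so each $\mathrm{cost}_P$ is one of our fixed convex functions.

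The core of the argument is the identity $f(\mathbf{s})=\max_{P\in\mathcal{P}}\mathrm{cost}_P(\mathbf{s})$ for every $\mathbf{s}$. The inequality ``$\le$'' is immediate: the decomposition produced by Algorithm~\ref{algorithm:rainflow} on input $\mathbf{s}$ is itself a template realizing the value $f(\mathbf{s})$. The reverse inequality — that no template can beat the rainflow one on its own input — is the real content, and where I expect the difficulty. The mechanism is that the rainflow decomposition is the \emph{most damaging} cycle decomposition consistent with the shape of $\mathbf{s}$: any competing template either splits one of rainflow's cycles of depth $d$ into depths $d_1+d_2=d$, which lowers the cost since $\Phi(d_1)+\Phi(d_2)\le\Phi(d)$; or it misreads a monotone stretch of $\mathbf{s}$ as several legs, which lowers the cost by the same superadditivity used through the identity $|s_i-s_j|=|s_i-s_k|+|s_k-s_j|$ valid when $s_k$ lies between $s_i$ and $s_j$; or it rebalances the nested family of rainflow cycles into more equal depths, which lowers $\sum\Phi(d_i)$ by Schur-convexity (a consequence of convexity of $\Phi$). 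Turning this into a proof requires following the rainflow procedure closely — the global-extremum split, the descending/ascending staircases of half-cycles, the leftover full cycles — to show these cases are exhaustive, and handling the degenerate configurations (flat segments, tied extrema) at which turning points appear or vanish. This structural/combinatorial verification is the main obstacle; the analytic content is light.

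Once the identity is established, $f=\max_{P\in\mathcal{P}}\mathrm{cost}_P$ is a finite maximum of convex functions, hence convex, which gives the stated Jensen inequality for all $\mathbf{s}_1,\mathbf{s}_2$ and $\lambda\in[0,1]$. If the global identity proves awkward to pin down, the same ingredients work along a line $t\mapsto\mathbf{a}+t\mathbf{b}$: each coordinate is affine in $t$, the rainflow combinatorics is constant on each of finitely many intervals, so $t\mapsto f(\mathbf{a}+t\mathbf{b})$ is continuous (using $\Phi(0)=0$) and on each piece has the convex form $\sum\Phi(|\text{affine}(t)|)$; convexity of a one-dimensional continuous piecewise-convex function then reduces to checking that the one-sided derivative jumps upward at each breakpoint, which is again the case analysis above in local form.
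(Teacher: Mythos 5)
Your strategy is genuinely different from the paper's. The paper decomposes $\mathbf{s}$ into a sum of step functions and proves convexity by induction on the number of steps: a base case (convexity against a single step, via superadditivity-type inequalities for convex $\Phi$ with $\Phi(0)=0$ and a perturbation bound on how cycle depths respond to one added step), followed by a case analysis on the signs and magnitudes of the last increments for the inductive step. You instead propose the representation $f(\mathbf{s})=\max_{P\in\mathcal{P}}\mathrm{cost}_P(\mathbf{s})$ over a fixed finite family of non-crossing pairing templates, each $\mathrm{cost}_P$ convex because it is a sum of terms $\Phi(|s_i-s_j|)$ with $\Phi$ convex, nondecreasing and vanishing at $0$. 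If completed, this is arguably the cleaner route: it explains \emph{why} $f$ is convex (it is a max of convex functions), it handles the nondifferentiability at cycle-junction points for free (the subdifferential of a pointwise max), and it connects to the classical fatigue-analysis fact that rainflow is the most damaging admissible cycle count for convex damage functions. The analytic ingredients you invoke (superadditivity, the majorization/Schur-convexity step) are exactly the ones the paper also uses in its Propositions.

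The gap is the one you name yourself, and it is not a small one: the inequality $\mathrm{cost}_P(\mathbf{s})\le f(\mathbf{s})$ for \emph{every} template $P$ is the entire content of the theorem in your formulation, and your three mechanisms (splitting a rainflow cycle, misreading a monotone stretch, rebalancing nested depths) are asserted rather than shown to be exhaustive. An arbitrary non-crossing template need not be reachable from the rainflow one by a sequence of such elementary moves each of which is cost-decreasing, so exhaustiveness requires an argument — essentially an induction on the turning-point sequence of the same order of difficulty as the paper's Case 1/2/3 analysis. A second point that needs care is the definition of $\mathcal{P}$ itself: it must be large enough that the rainflow pairing of \emph{every} $\mathbf{s}\in\mathbb{R}^T$ (including profiles with ties, flat segments, and discontinuous-in-time half cycles as in Fig.~\ref{fig:intro_rainflow}) lies in $\mathcal{P}$, yet restrictive enough (non-crossing, consistent with monotone runs) that no template exceeds the rainflow damage; get either side wrong and one of the two inequalities in $f=\max_P\mathrm{cost}_P$ fails. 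Your fallback via restriction to lines does not escape this: checking that the one-sided derivative of $t\mapsto f(\mathbf{a}+t\mathbf{b})$ jumps upward at a breakpoint where the rainflow combinatorics changes is the same case analysis in local form. So the proposal is a sound and attractive plan, but as written the theorem is reduced to an unproven combinatorial lemma rather than proved.
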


This above theorem is intuitively pleasing. Consider two SoC series $\mathbf{s_1}$ and $\mathbf{s_2}$, if they change in different directions, the two signals can cancel each other out partially so that the cost of left side is less than the right side. When $\mathbf{s_1}$ changes in the same direction as $\mathbf{s_2}$ for all time steps, the equality holds. 

We prove Theorem \ref{theo1} by induction. It contains three steps, 1) unit step function decompostion of battery SoC, 2) the initial condition proof and 3) the induction relation proof from K to K+1 step. Here we sketch a proof for the conveixty. 
	
\subsection{Unit step function decompostion}
First, we introduce the step function decompostion of SoC signal. We notice that, any SoC series $\mathbf{s}$, could be written out as a finite sum of step functions, where
	\begin{align}
	\mathbf{s} = \sum_{i =1}^{T} P_i u(t-i) \,, \label{eq:dep_s1}
	\end{align}
	where $u(t-i)$ is a unit step function with a jump at time $i$, and $P_i$ is the jump amplitude.
	\begin{align}
	u(t-i) = 
	\begin{cases}
	1& \text{t $\geq$ i}\\
	0 & \text{otherwise}
	\end{cases} \ \ \forall t = 1,2,...,T\,,
	\end{align}
	
	For notation convenience, we use $U_i$ to denote $u(t-i)$. Fig. \ref{fig:discreted} gives an example of step function decomposition results of $\mathbf{s_1}$.
	\begin{figure}[!ht]
		\centering
		\includegraphics[width=0.8 \columnwidth]{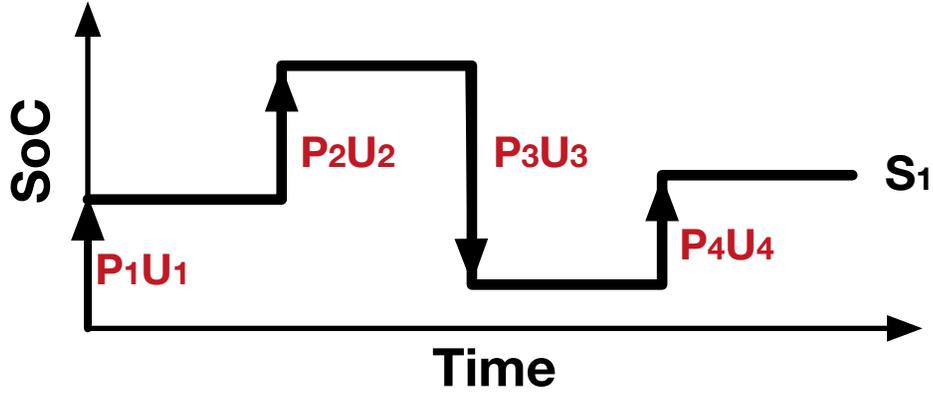}
		\caption{Step function decomposition of SoC. $\mathbf{s_1}$ is decomposed to four step functions $P_1 U_1$, $P_2 U_2$, $P_3 U_3$ and $P_4 U_4$.}
		\label{fig:discreted}
	\end{figure}

\subsection{Initial condition proof}
Since all SoC profiles can be written as the sum of step functions. We first need to prove that $f(\mathbf{s})$ is convex up to one step function as the base case. As shown in Fig. \ref{fig_onechange}, we want to prove that,
	\begin{small}
	\begin{equation*}
	f\left(\lambda \mathbf{s_1} + (1-\lambda)P_i U_i \right) \leq \lambda f(\mathbf{s_1}) + (1-\lambda) f(P_i U_i )\,, \lambda \in [0,1]\,,
	\end{equation*}	
	\end{small}
where $\mathbf{s_1} \in \mathbb{R}^T$, and $P_i U_i$ is a step function with a jump happens at time $i$ with amplitude $P_i$.
\begin{figure}[!ht]
		\centering
		\includegraphics[width=0.8 \columnwidth]{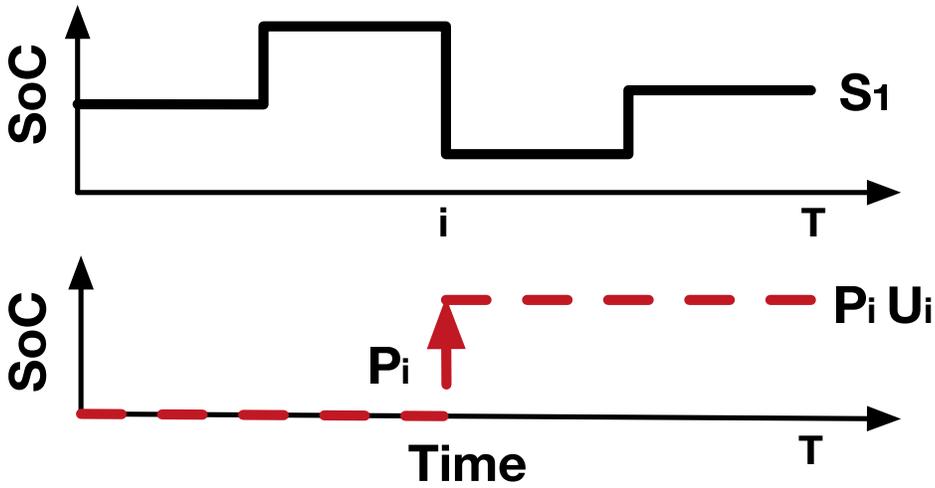}
		\caption{Single step convexity. The upper plot is a SoC profile $\mathbf{s_1} \in \mathbb{R}^T$, the bottom plot is a step function with a amplitude $P_i$ jump at time $i$.}
		\label{fig_onechange}
\end{figure}
	
\subsection{Induction relation}
Having proved the single step convexity, we know that Theorem 1 is true if both $\mathbf{s_1}, \mathbf{s_2} \in \mathbb{R}^{1}$. Now, let's assume that, $f(\mathbf{s})$ is convex up to the sum of $K$ step changes (arranged by time index), 
	$$f\big(\lambda \mathbf{s_1} + (1-\lambda) \mathbf{s_2}\big) \leq \lambda f(\mathbf{s_1}) + (1-\lambda) f(\mathbf{s_2})\,, \lambda \in [0,1]$$
	where $\mathbf{s_1}, \mathbf{s_1} \in \mathbb{R}^{K}$. Considering proof by induction, we need to show $f(\mathbf{s})$ is convex up to the sum of $K+1$ step changes (see Fig. \ref{fig_induction}),
	$$f\big(\lambda \mathbf{s_1} + (1-\lambda) \mathbf{s_2}\big) \leq \lambda f(\mathbf{s_1}) + (1-\lambda) f(\mathbf{s_2})\,, \lambda \in [0,1]$$
	where $\mathbf{s_1}, \mathbf{s_2} \in \mathbb{R}^{K+1}$. 
	
	\begin{figure}[!ht]
		\centering
		\includegraphics[width=0.8 \columnwidth ]{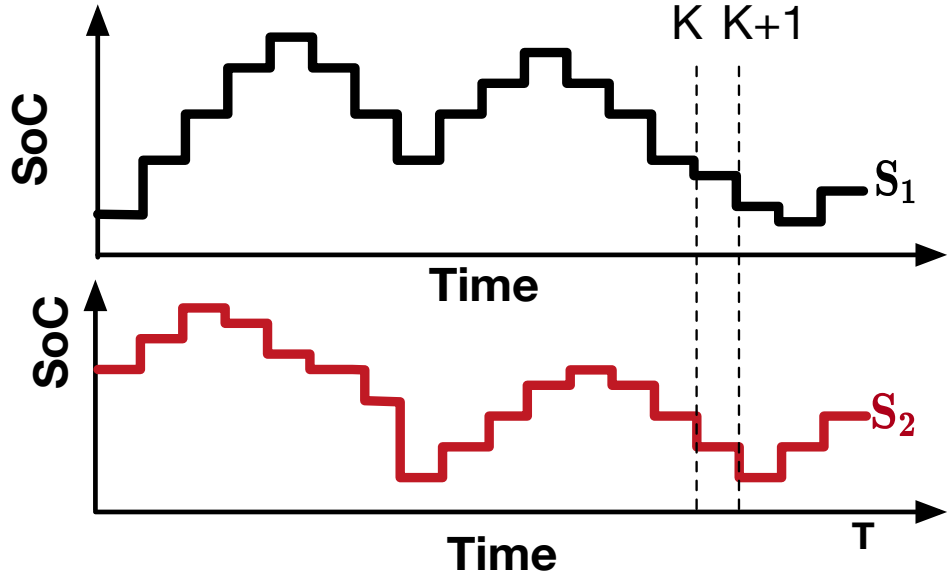}
		\caption{Induction step from K to K+1. Both $\bd S_1$ and $\bd S_2$ are decomposed as the sum of T step functions.}
		\label{fig_induction}
	\end{figure}
The above three-step sketch builds the overall framework for convexity proof. For a more detailed proof step by step, interested readers could refer to Section \ref{sec:proof}.

\section{Subgradient Algorithm}
\label{sec:subgradient}
In this section, we formulate a general optimization problem of battery operation in power market. We use the rainflow cycle-based model for battery degradation calculation. The optimization problem is convex, but the objective function is not differentiable at some points (cycle junction points). Therefore, we provide a subgradient algorithm. With proper step size, the subgradient algorithm is guaranteed to converge to the optimal solution with a user-defined precision level.

\subsection{Battery operation in markets}
Assume we have a battery operating in power markets and providing some profitable service (eg. regulation service, demand response), the battery operation is defined over a finite discrete time intervals, where $t = 1,2,...,T$ with time resolution $t_s$. Following the same formulation in Section \ref{sec:II-A}, we aim to maximize the utility of battery operation over a period of time $t \in [1, T]$.
\begin{subequations}\label{eq:battery_operation}
	\begin{align}
	\max_{\mathbf{c}, \mathbf{d}} U(\cdot) & = R(\mathbf{c}, \mathbf{d}) - \lambda\up{r}\Big[\sum_{i\in N\up{ch}}\Phi(d\up{ch}_i)+\sum_{j\in N\up{dc}}\Phi(d\up{dc}_j)\Big] \,,\label{Eq:PF_obj} \\
	\text{s.t.}  &  [d\up{ch}\;d\up{dc}]^T = \bd{Rainflow} (s(1), \dotsc, s(T))\,,\label{Eq:PF_C1}\\
	& s(1) = s\up{0}\,,\label{Eq:PF_C3}\\
	& s(t+1) = s(t) + \left[c(t)\eta\ud{c}-{d(t)}/{\eta\ud{d}}\right]t_s\,,\label{Eq:PF_C4}\\
	& s\up{min}  \leq s(t) \leq s\up{max}\,,\label{Eq:PF_C5}\\
	& 0\ \leq c(t) \leq P\up{max}\,,\label{Eq:PF_C6}\\
	& 0\ \leq d(t) \leq P\up{max}\,.\label{Eq:PF_C7}
	\end{align}
\end{subequations}
where in Eq. \eqref{Eq:PF_obj}, the first term represents the operation revenue and the second term captures the battery degradation cost by rainflow cycle model. Eq. \eqref{Eq:PF_C3} and \eqref{Eq:PF_C4} describes the evolution of battery SoC, where $s^{0}$ is the given initial state. Battery SoC is limited within $[s\up{min}, s\up{max}]$, in general, $s\up{min}=0$ (empty) and $s\up{max}=1$ (full charge). We include the battery charging/discharging efficiency $\eta_{c}$ and $\eta_{d}$ in the optimization model. The battery output power is subject to power rating in \eqref{Eq:PF_C6}~-~\eqref{Eq:PF_C7}.

Note, different from previous notations, we seperate cycles into charging half cycles (CHC) $d\up{ch}$ and discharging half cycles (DHC) $d\up{dc}$ in Eq. \eqref{Eq:PF_C1} for optimization convenience\footnote{This is because charging and discharging response have different subgradient forms, which will be discussed in Section \ref{sec:IV-II}}. Assume we have $N\up{ch}$ number of CHCs, each one is indexed by $i$ with cycle depth $d\up{ch}_i$; and $N\up{dc}$ number of DHCs, each is indexed by $j$ with depth $d\up{dc}_i$. 

Battery SoC $\mathbf{s}$ is a linear function of battery power output $\mathbf{c}$ and $\mathbf{d}$ according to Eq. \eqref{Eq:PF_C4}. We have proved that the rainflow cycle cost is convex in terms of $\mathbf{s}$. Therefore, the rainflow cycle cost is also convex in terms of $\mathbf{c}$ and $\mathbf{d}$. If the revenue function $R(\cdot)$ is concave, the overall battery operation optimization problem formulated in \eqref{Eq:PF_obj}~-~\eqref{Eq:PF_C7} is a convex optimization problem. 

\subsection{Subgradient algorithm}
\label{sec:IV-II}
To solve the convex battery operation problem in Eq. \eqref{Eq:PF_obj} - \eqref{Eq:PF_C7}, we provide a subgradient algorithm. Using a log-barrier function \cite{boyd2004convex}, we re-write the constrained optimization problem to an unconstrained optimization problem. 
\begin{gather}
\min_{\mathbf{c}, \mathbf{d}} U(\cdot) := -R(\mathbf{c}, \mathbf{d}) + \lambda\up{r}\Big[\sum_{i\in N\up{ch}}\Phi(d\up{ch}_i)+\sum_{j\in N\up{dc}}\Phi(d\up{dc}_j)\Big] \nonumber\\
- \frac{1}{\lambda} \cdot \Big \{\sum_{t=1}^{T} {\log[s^{max}-s(t)]} + \sum_{t=1}^{T} {log[s(t)-s^{min}]} \nonumber\\
+ \sum_{t=1}^{T} {\log[P^{max}-c(t)]} + \sum_{t=1}^{T} {\log[c(t)]} \nonumber\\
+ \sum_{t=1}^{T} {\log[P^{max}-d(t)]} + \sum_{t=1}^{T} {\log[d(t)]} \Big\}
\label{eq:log_barrier_obj}
\end{gather}
when $\lambda \rightarrow +\infty$, the uncontrained problems \eqref{eq:log_barrier_obj} equals to the original constrained problem Eq. \eqref{Eq:PF_obj} - \eqref{Eq:PF_C7}. Note here we change the original maximization problem to an equivalent minimization problem for standard form expression.

The major challenge of solving Eq. \eqref{eq:log_barrier_obj} lies in the second term. We need to find the mathematical relationship between charging cycle depth $d\up{ch}_i$ and charging power $c(t)$, as well as the relationship between discharging cycle depth $d\up{dc}_j$ and discharging power $d(t)$. Recall that the rainflow cycle counting algorithm introcuded in Section \ref{sec:rainflow2}, each time index is mapped to at least one charging half cycle (CHC) or at least one discharging half cycle (DHC). Some time steps sit on the \emph{junction} of two cycles. For example, in Fig. \ref{fig_turning}, $E^{'}$ lies on the junction of discharging cycle $D-G$ and discharging cycle $F-E^{'}$. No time step belongs to more than two cycles. 

Let the time index mapped to the CHC $i$ belong to the set $T\up{ch}_i$, and let the time index mapped to the DHC $j$ belong to the set $T\up{dc}_j$, it follows that
\begin{align}
T\up{ch}_1\cup\dotsc\cup T\up{ch}_{N\up{ch}}\cup T\up{dc}_1\cup\dotsc\cup T\up{dc}_{N\up{ch}} &= T\,,\\
T\up{ch}_i\cap T\up{dc}_j & = \emptyset\,,\, \forall i,j \label{eq:chargingordis}\,.
\end{align}
Eq. \eqref{eq:chargingordis} shows there is no overlapping between a charging and a discharging interval. That is, each half-cycle is either charging or discharging. The cycle depth therefore equals to the sum of battery charging or battery discharging within the cycle time frame, 
\begin{align}
d\up{ch}_i &= \sum_{t\in T\up{ch}_i} \frac{c(t)t_s\eta_{c}}{E}\,,  \label{Eq:rf_ch}\\
d\up{dc}_j &= \sum_{t\in T\up{dc}_j} \frac{d(t)t_s/\eta_{d}}{E}\,, \label{Eq:rf_dc}
\end{align}

The rainflow cycle cost $\sum_{i\in N\up{ch}}\Phi(d\up{ch}_i)$ is not continuously differentiable. At cycle junction points, it has more than one subgradient. Therefore, we use $\partial \sum_{i\in N\up{ch}}\Phi(d\up{ch}_i)|_{c(t)}$ to denote a subgradient at $c(t)$,
\begin{equation}
\partial \sum_{i\in N\up{ch}}\Phi(d\up{ch}_i)|_{c(t)} = \Phi^{'}(d\up{ch}_i) \frac{t_s \cdot \eta_{c}}{E}\,, t\in T\up{ch}_i\,,
\label{Eq:pardf_ch}
\end{equation}
where $d_i^{ch}$ is the depth of cycle that $c(t)$ belongs to. Note, at junction points, $c(t)$ belongs to two cycles so that the subgradient is not unique. We can set $d_i^{ch}$ to any value between $d_{i1}^{ch}$ and $d_{i2}^{ch}$, where $d_{i1}^{ch}$ and $d_{i2}^{ch}$ are the depths of two junction cycles $c(t)$ belongs to. 

Similarly for discharging cycle, a subgradient at $d(t)$ is，
\begin{align}
\partial \sum_{j\in N\up{dc}}\Phi(d\up{dc}_j)|_{d(t)} = \Phi^{'}(d\up{dc}_j) \frac{t_s} {\eta_{d} \cdot E}\,, t\in T\up{dc}_j \label{Eq:pardf_dc}
\end{align}
where $d_j^{dc}$ is the depth of the cycle that $d(t)$ belongs to. At the junction point, $d_j^{dc}$ could be set to any value between $d_{j1}^{dc}$ and $d_{j2}^{dc}$, which are the two junction cycles $d(t)$ belongs to. 

Therefore, we write the subgradient of $U(\cdot)$ with respect to $c(t)$ and $d(t)$ as $\partial U_{c(t)}$ and $\partial U_{d(t)}$, where 
\begin{small}
\begin{align}
\partial U_{c(t)} &= -\frac{\partial R}{\partial c(t)} + \lambda^{r} \Phi^{'}(d\up{ch}_i) \frac{t_s \eta_{c}}{E} \nonumber\\
&- \frac{1}{\lambda} \Big\{\sum_{\tau=t}^{T} \frac{1}{s(\tau)-s^{max}}(\frac{\eta_{c}t_s}{E}) + \sum_{\tau=t}^{T} \frac{1}{s(\tau)-s^{min}}(\frac{\eta_{c}t_s}{E}) \nonumber\\
&+ \frac{1}{c(t)-P^{max}} + \frac{1}{c(t)} \Big\}, t \in T_{i}^{ch}
\end{align}
\end{small}
\begin{small}
\begin{align}
\partial U_{d(t)} &= -\frac{\partial R}{\partial d(t)} + \lambda^{r} \Phi^{'}(d\up{dc}_j) \frac{t_s} {\eta_{d} \cdot E} \nonumber\\
&- \frac{1}{\lambda} \Big\{- \sum_{\tau=t}^{T} \frac{1}{s(\tau)-s^{max}}(\frac{t_s}{\eta_{d} E}) - \sum_{\tau=t}^{T} \frac{1}{s(\tau)-s^{min}}(\frac{t_s}{\eta_{d} E}) \nonumber\\
& + \frac{1}{d(t)-P^{max}} + \frac{1}{d(t)} \Big\}, t \in T_{j}^{dc}
\end{align}
\end{small}

The update rules for $c(t)$ and $d(t)$ at the k-th iteration are,
\begin{equation*}
c_{(k)}(t) = c_{(k-1)}(t) - \alpha_k \cdot \partial U_{c(t)}\,,
\label{eq:charging_update}
\end{equation*}
\begin{equation*}
d_{(k)}(t) = d_{(k-1)}(t) - \alpha_k \cdot \partial U_{d(t)}\,,
\label{eq:discharging_update}
\end{equation*}
where $\alpha_k$ is the step length at k-th iteration. Since the subgradient method is not a decent method \cite{boyd2004}, it is common to keep track of the best point found so far, i.e., the one with smallest function value. At each step, we set
\begin{equation*}
U_{(k)}^{best} = \min \big\{U_{(k-1)}^{best}, U(\mathbf{c}_{(k)}, \mathbf{d}_{(k)})\big\}\,,
\end{equation*}

Since the $U(\cdot)$ is convex, for constant step length ($\alpha_k$ is constant), the subgradient algorithm is guaranteed to converge to the global optima within certain gap bound. 
\begin{equation*}
U_{(k)}^{best}-U^{*} \rightarrow \frac{G^2 \alpha}{2}\,,
\end{equation*}
where $U^{*}$ as the global minimum of Eq. \eqref{eq:log_barrier_obj}, $\alpha$ is the contant step size, and $G$ is the upper bound of the 2-norm of the following sub-gradient vectors,
\begin{align*}
\partial U_{\mathbf{c}} &= [\partial U_{c(1)}, \partial U_{c(2)}, ..., \partial U_{c(T)}]\,, \\
\partial U_{\mathbf{d}} &= [\partial Y_{d(1)}, \partial U_{d(2)}, ..., \partial U_{d(T)}]\,, 
\end{align*}
If we set the step size $\alpha$ to be small enough, we could guarantee the precision of the subgradient algorithm.

\section{Case study}
\label{sec:case}
The proposed battery degradation model has a wide application scope in battery planning and operation problems. In this section, we provide a case study on PJM regulation market to demonstrate the effectiveness of the proposed  degradation model in optimizing BES operation utilities and extending BES lifetime.

\subsection{Frequency regulation market}
We consider the optimal operation of a BES in frequency regulation market. In particular, we adopt a simplified version of the PJM fast regulation market~\cite{bolun_policy}. For market rules, the grid operator pays a per-MW option fee ($\lambda_c$) to a battery storage with stand-by power capacity $C$ that it can provide for the grid. While during the regulation procurement period, a regulation instruction signal $r(t)$ is sent to the battery. Battery should respond to follow the instruction signal and is subjected to a per-MWh regulation mismatch penalty $\lambda^{p}$ for the absolute error between the instructed dispatch Cr(t) and the resource's actual response b(t). Fig. \ref{fig:case_reg} gives an example of the PJM fast regulation signal for 2 hours.

\begin{figure}[!ht]
	\centering
	\includegraphics[width=4in, height = 2in]{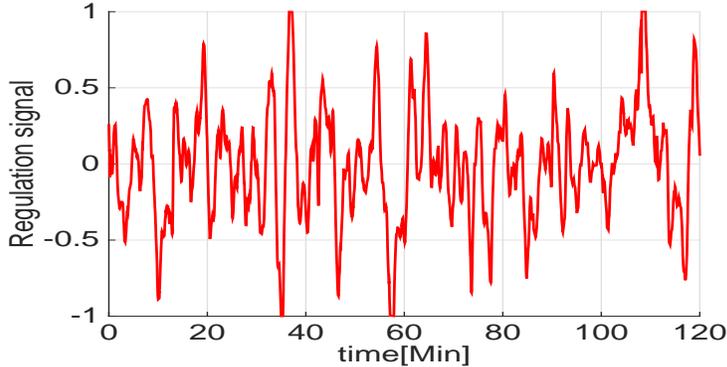}
	\caption{PJM fast regulation signal for 2 hours}
	\label{fig:case_reg}
\end{figure}

We decompose the regulation signal $r(t)$ into charging and discharging parts, $r(t) = r_{d}(t)-r_{c}(t)$,
%
where $r_{c}(t)$ is the charging instruction signal and $r_{d}(t)$ is the discharging signal. Thus, the regulation service revenue is,
\begin{small}
	\begin{equation}
		R(\mathbf{c}, \mathbf{d}) = \lambda_c C \cdot T - \sum_{t\in T} \lambda\up{p}t_s\left|(r_{c}(t)-c(t))+(r_{d}(t)-d(t))\right|\,, \label{eq:reg}
	\end{equation}
\end{small}
where the first part represents the regulation capacity payment and the second part is the mismatch penalty. In this work, we assume the regulation capacity bidding $C$ is fixed, and focus on the optimization of battery response. The regulation revenue $R(\mathbf{c}, \mathbf{d})$ is concave with respect to $\mathbf{c}, \mathbf{d}$. Therefore, plugging \eqref{eq:reg} to the battery market operation model in \eqref{Eq:PF_obj}~-~\eqref{Eq:PF_C7}, we obtain a convex optimization problem for optimizing battery usage in frequency regulation market.

\subsection{Benchmark}
To demonstrate the efficiency of the proposed battery degradation model in maximizing the BES operation utilities as well as extending BES lifetime, we describe two benchmark battery degradation models: assuming zero operating cost \cite{pozo2014} and a linear power-based degradation cost \cite{shi2016leveraging}.

Assume the battery optimization horizon is 2 hours and the time granularity $t_s$ is $4s$, so that $T=1800$. We adopted the regulation market price and linear battery cost model coefficients from paper~\cite{shi2017using}, where the regulation capacity payment is $50\$/MWh$ and the mismatch penalty is $150\$/MWh$. We fixed the regulation capacity bidding as 1MW, with the same value as the battery power capacity. The battery energy capacity $E$ is 15 minute max power output,  and cell replacement price is $0.6 \$/Wh$. Assume battery DoD stress model has a polynomial form as Eq. \eqref{eq:polynomial_dod}, where $k_4 = 4.5e-4$ and $k_5=1.3$~\cite{xu2016modeling}. What's more, we set both the charging/discharging efficiency $\eta_{c}$, $\eta_{d}$ to $0.95$.

\begin{figure}[!ht]
	\centering
	\includegraphics[width=\columnwidth,height=4cm]{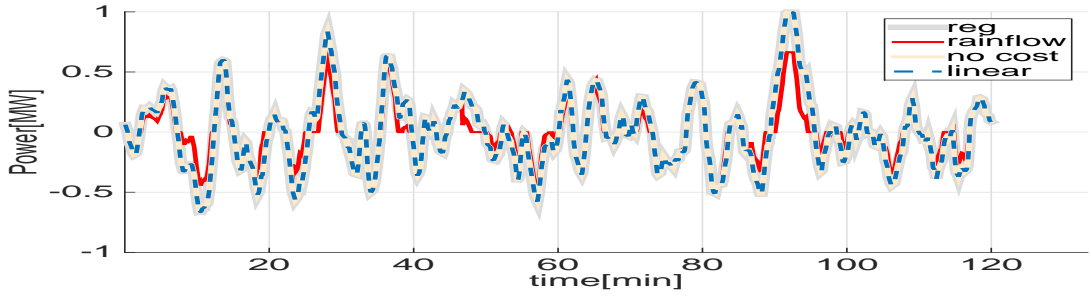}
	\caption{Battery response to the regulation signal under three different battery cost models}
	\label{fig:battery}
\end{figure}
\begin{figure}[!ht]
	\centering
	\includegraphics[width=\columnwidth,height=4cm]{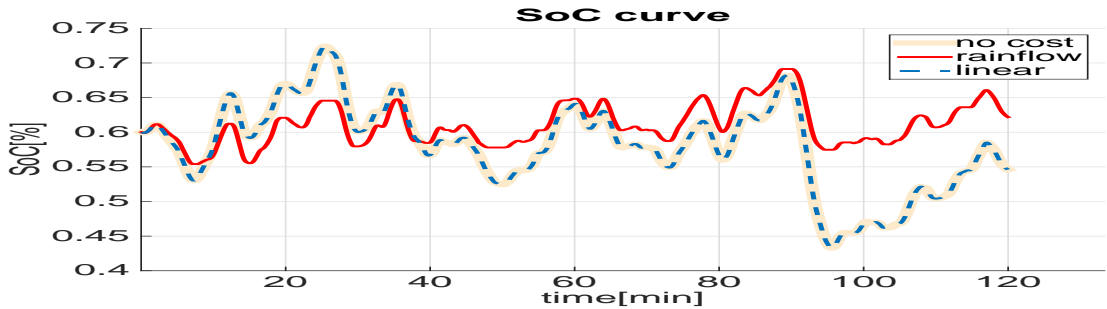}
	\caption{Battery SoC under different battery cost models}
	\label{fig:soc}
\end{figure}
\begin{table}[!t]
	\centering
	\begin{tabular}{|l|l|l|l|}
		\hline
		Annual cost & Rainflow & No cost & Linear\\
		\hline
		\textbf{Total regulation utility (k\$)} & \bf{176} & \bf{137.9} & \bf{137.9}\\
		\hline
		Regulation service payment (k\$) & 338.9 & 438 & 438\\
		\hline
		Modeled battery degradation (k\$) & 162.9 & 0 & 207.2\\
		\hline
		Actual battery degradation (k\$) & 162.9 & 300.1 & 300.1\\
		\hline
		\textbf{Battery life expectancy (month)} & \bf{11.1} & \bf{6} & \bf{6} \\
		\hline
	\end{tabular}
	\caption{Annual economics of BES operation under three battery cost models. ``Modeled battery degradation'' refers to the battery operation cost captured by the model in use, ``actual degradation cost'' is assessed by posterior rainflow algorithm, and ``total regulation utility'' is calculated by payment subtracting actual battery degradation cost.}
	\label{case:T1}
\end{table}

Fig. \ref{fig:battery} and Fig. \ref{fig:soc} compare the power output and SoC evolution for the \emph{same battery} optimized under three different degradation models: assuming no operating cost, linear power-based model and the proposed rainflow cycle-based model. The grey curve in Fig. \ref{fig:battery} is the regulation instruction signal. We find that under no cost model (light yellow curve) and linear power-based degradation model (blue dashed line), the battery response is completely following the regulation instruction signal. However, for the rainflow cycle-based model, instead of  the ``blindly following'' strategy, the battery response strives a good balance between mismatch penalty and degradation cost.
In Fig. \ref{fig:soc}, we observed that the battery SoC is restricted to a moderate range and evolves smoothly for the rainflow cycle-based cost.

Table \ref{case:T1} summarizes the annual economics of the same battery optimized using three different degradation models. We find that, using the rainflow cycle-based degradation model, we have the highest operational utility and longest expected battery lifetime. The total regulation revenue increases by 27.6\% and the battery lifetime almost doubles compared with the other two cases.

\section{Proof}
\label{sec:proof}
Section \ref{sec:convexity} provides a sketch of the convexity proof. Here we extends the sketch and explain the critical proof steps. A full version of the convexity proof could be found in the Appendix.
\begin{proof}
Firstly, we use step function decomposition method to write out $\mathbf{s_1}$, $\mathbf{s_2}$ and $\lambda \mathbf{s_1} + (1-\lambda) \mathbf{s_2}$ as a finite sum of step functions, where
\begin{align}
&\mathbf{s_1} = \sum_{i =1}^{T} Q_i U_i \,, \mathbf{s_2} = \sum_{i =1}^{T} P_i U_i \nonumber \\
&\lambda \mathbf{s_1} + (1-\lambda) \mathbf{s_1}  = \sum_{i =1}^{T} X_i U_i\,, \label{eq:dep_sum_proof}
\end{align}

We obtain the overall proof by induction. We first prove that $f(\mathbf{s})$ is convex up to any \emph{single} step change.

\begin{lemma}
The rainflow cycle-based cost function $f(\mathbf{s})$ is convex up to one step change of the original SoC profile,
\begin{equation*}
f\left(\lambda \mathbf{s_1} + (1-\lambda) P_i U_i\right) \leq \lambda f(\mathbf{s_1}) + (1-\lambda) f(P_i U_i)\,, \lambda \in [0,1]\,,
\label{lemma:single_step_proof}
\end{equation*}
\end{lemma}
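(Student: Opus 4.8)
The plan is to prove the Lemma directly from the mechanics of Algorithm~\ref{algorithm:rainflow}, using only that $\Phi$ is convex, nondecreasing and $\Phi(0)=0$ (as in all three models of Section~\ref{sec:dod}). Write $h:=(1-\lambda)P_i$; the cases $\lambda\in\{0,1\}$ and $h=0$ are immediate, and since every rainflow depth is an absolute difference of SoC values the sign of $P_i$ is immaterial, so take $\lambda\in(0,1)$, $h>0$. Two reductions clear away the right-hand side. First, $P_iU_i$ is monotone, so Algorithm~\ref{algorithm:rainflow} reads off a single half cycle of depth $|P_i|$ and $f(P_iU_i)=\Phi(|P_i|)$. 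Second, scaling a profile by $\lambda$ scales every cycle depth by $\lambda$, so by convexity and $\Phi(0)=0$ we get $\Phi(\lambda d_k)\le\lambda\Phi(d_k)$ term by term; hence $f(\lambda\mathbf{s_1})\le\lambda f(\mathbf{s_1})$, with nonnegative slack $\sigma:=\sum_k[\lambda\Phi(d_k)-\Phi(\lambda d_k)]$, where $d_1,d_2,\dots$ are the depths of $\mathbf{s_1}$. What remains is to control $f$ along the one operation that turns $\lambda\mathbf{s_1}$ into $\mathbf{s}:=\lambda\mathbf{s_1}+hU_i$: adding the constant $h$ to the time-suffix $\{t\ge i\}$.

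For that I would use two facts about rainflow counting. (i) The pairing of turning points it produces is non-crossing: the time-intervals spanned by the cycles form a laminar family. (ii) Adding $h$ to the suffix $\{t\ge i\}$ fixes every turning point at time $<i$, raises every turning point at time $>i$ by exactly $h$, and creates, alters or deletes at most two turning points near index $i$, with value change there at most $h$. Together these show: a cycle of $\lambda\mathbf{s_1}$ with both turning points on the same side of $i$ keeps its depth $\lambda d_k$; the cycles straddling $i$ form a single nested chain $C_1\supset C_2\supset\cdots$ with depths $\lambda d_1\ge\lambda d_2\ge\cdots$ whose value-ranges are also nested (so those depths contract down the chain); because a full cycle nested inside another has the opposite orientation, the shift sends $C_k$'s depth from $\lambda d_k$ to $|\lambda d_k\pm h|$ with the sign alternating in $k$; and at most one new cycle, of depth $\le h$, is created. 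Thus $f(\mathbf{s})-f(\lambda\mathbf{s_1})=\sum_{k\ \mathrm{str.}}[\Phi(|\lambda d_k\pm h|)-\Phi(\lambda d_k)]+\Phi(\mathrm{new})$.

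It then remains to estimate that difference. The new cycle contributes at most $\Phi(h)=\Phi((1-\lambda)|P_i|)\le(1-\lambda)\Phi(|P_i|)$. For the chain I would resist a term-by-term bound — each grown cycle $\lambda d_k+h$ would demand its own copy of $\Phi(|P_i|)$, which overshoots the budget — and instead pool the chain's cost change with the slack $\sigma$ reserved above. Grouping the grow/shrink cycles into consecutive pairs and invoking the convexity inequality that transferring depth between two cycles raises the total only when it moves from the smaller cycle to the larger one, and then only up to the amount by which they were separated, together with the nesting of value-ranges, which caps the total depth present along the chain by $\lambda d_1$, one shows the chain's net increase is absorbed by $\sigma$ plus the leftover of $(1-\lambda)\Phi(|P_i|)$. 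Summing, $f(\mathbf{s})\le f(\lambda\mathbf{s_1})+\sigma+(1-\lambda)\Phi(|P_i|)\le\lambda f(\mathbf{s_1})+(1-\lambda)f(P_iU_i)$, which is the claim.

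The main obstacle is twofold. Establishing the ``only a nested, alternating-sign chain plus one small new cycle is perturbed'' picture requires a careful (if routine) analysis of how the jump at $i$ meets the monotone run straddling $i$ and then survives the recursive deepest-cycle extraction. And, more seriously, the pooling step must genuinely combine the two reserves — the convexity gap in $f(\lambda\mathbf{s_1})\le\lambda f(\mathbf{s_1})$ and the $(1-\lambda)\Phi(|P_i|)$ term — because neither alone suffices once the straddling chain is long; getting that bookkeeping tight is where I expect most of the work. A cleaner-looking alternative, if the surgery gets unwieldy, is to write each rainflow amplitude through Rychlik's level/supremum formula (a minimum of two suprema of the profile minus a local extreme value, and the symmetric statement for maxima) and run the same convexity estimates directly on that representation.
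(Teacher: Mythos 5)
Your overall skeleton --- reduce $f(P_iU_i)$ to $\Phi(|P_i|)$, compare $f(\lambda \mathbf{s_1}+hU_i)$ with $f(\lambda \mathbf{s_1})$ by tracking how each cycle depth is perturbed, then close with a convexity estimate --- is the same as the paper's, but the structural step you lean on is false. You claim that adding $h$ to the suffix $\{t\ge i\}$ leaves every cycle whose two turning points lie on the same side of $i$ untouched, and perturbs only a nested chain of straddling cycles, each by exactly $\pm h$ with alternating sign. Rainflow pairing is not local in that sense: the shift can promote a post-$i$ extremum above a pre-$i$ one and re-pair turning points globally. For the turning-point sequence $0,10,2,9,0$ with $h=2$ added from the fourth point onward (giving $0,10,2,11,2$), the half-cycle depths change from $\{10,10,7,7\}$ to $\{11,9,8,8\}$: the depth-$10$ half cycle joining the first two turning points, which lies entirely before $i$, is destroyed, and under any matching of old to new cycles some depths change by $1$, not by $\pm h=\pm 2$. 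Since your pooling argument explicitly relies on the nested, alternating-sign chain (consecutive pairing of grow/shrink cycles, value-ranges capped by $\lambda d_1$), the bookkeeping collapses together with the structure.

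What survives this example --- and what the paper actually uses --- are only aggregate bounds: after matching cycles between the two decompositions (the paper's Proposition 6, argued by an infinitesimal-perturbation/integration step rather than by surgery on the pairing), each matched depth increment $\Delta d_k$ satisfies $|\Delta d_k|\le |P_i|$ and $\bigl|\sum_k \Delta d_k\bigr|\le |P_i|$. The paper then closes not term by term but with a superadditivity inequality for convex $\Phi$ with $\Phi(0)=0$ (its Proposition 1/5): if $\sum_k x_k=D>0$ and $|x_k|\le D$, then $\Phi(D)\ge\sum_{x_k\ge 0}\Phi(x_k)-\sum_{x_k<0}\Phi(|x_k|)$, applied to the increments with $D=|P_i|$ and padded with ``virtual cycles'' when the increments sum to less than $|P_i|$. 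That route needs no information about which cycles straddle $i$ or how signs alternate, and it subsumes the slack $\sigma$ you reserve separately. To salvage your argument, replace the chain picture by these aggregate bounds and adopt such an inequality; your Rychlik-formula fallback is a more promising way to make the perturbation analysis rigorous than the local surgery picture.
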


We need the following proposition to prove Lemma 1.
\begin{proposition}
	Let $g(\cdot)$ be a convex function where $g(0) = 0$. Let $x_1, x_2, x_3, ..., x_N$ be real numbers, suppose $\sum_{i=1}^{N}x_i = D >0$ and $|x_i|  \leq D, \forall i \in \{1,2,3,...,N\}$
	Then,
	\begin{equation}
	g\left(\sum_{i=1}^{n} x_i\right) \geq \sum_{\{i: x_i \geq 0\}} g(x_i) - \sum_{\{i: x_i < 0\}} g(|x_i|)\,,
	\end{equation}
	\label{l1:deltas}
\end{proposition}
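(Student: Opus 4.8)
\emph{Strategy.} The plan is to reduce the Proposition to two elementary ``generator'' inequalities, exploiting that both of its sides depend linearly on $g$. Only the restriction of $g$ to $[0,\infty)$ is relevant, since the only arguments of $g$ appearing are $D>0$ and the magnitudes $|x_i|\le D$. Any convex $g$ on $[0,\infty)$ with $g(0)=0$ can be written $g(x)=\beta x+\int_{0}^{\infty}\max\{x-\sigma,0\}\,\nu(d\sigma)$ with $\beta=g'(0^{+})$ and $\nu$ a nonnegative measure (for a DoD stress function $\beta\ge 0$, and in any case the linear part below contributes equality, so its sign is immaterial). The functional $g\mapsto g(D)-\bigl(\sum_{i:\,x_i\ge 0}g(x_i)-\sum_{i:\,x_i<0}g(|x_i|)\bigr)$ is linear in $g$, so it suffices to verify the claimed inequality for $g(x)=x$ and for $g(x)=\max\{x-\sigma,0\}$, each $\sigma\ge 0$, and then integrate against $\nu$ (a finite measure on $[0,D]$, which is all that is used).

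\emph{The generator cases.} For $g(x)=x$ the inequality reads $D\ge\sum_{i:\,x_i\ge 0}x_i-\sum_{i:\,x_i<0}|x_i|=\sum_{i=1}^{N}x_i=D$, i.e., it holds with equality; this is exactly the degenerate situation (all motion in one direction) in which Theorem~\ref{theo1} is tight. For $g(x)=\max\{x-\sigma,0\}$ with $\sigma\ge D$, the bound $|x_i|\le D\le\sigma$ annihilates every term and the inequality is $0\ge 0$. The substantive case is $0\le\sigma<D$, where one must show
\[
D-\sigma\ \ge\ \sum_{i:\,x_i\ge 0}\max\{x_i-\sigma,0\}\ -\ \sum_{i:\,x_i<0}\max\{|x_i|-\sigma,0\}\,.
\]

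\emph{The main obstacle.} Establishing this hinge inequality is the real content of the Proposition, and I expect it to be the hard step: the left side is a single truncated amount while the right side can gather many positive truncations, so the balance constraint $\sum_i x_i=D$ must be played off against the uniform bound $|x_i|\le D$ very tightly. The plan is to sort the $x_i$, process the positive terms exceeding $\sigma$ one at a time, and charge each either to the still-unused portion of $D-\sigma$ or against one of the negative truncations $\max\{|x_i|-\sigma,0\}$, maintaining a running account of the deficit; making that accounting close is where essentially all the work lies, and I suspect it may in fact require an additional structural property of the $x_i$ beyond the bare hypotheses stated---namely the one they inherit in the paper from being consecutive jumps $P_i$ of a SoC trace confined to $[s\up{min},s\up{max}]$. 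An alternative route with the same core difficulty is induction on the number of negative $x_i$: delete one negative term of magnitude $v\le D$, redistribute mass $v$ out of the nonnegative terms (feasible since they sum to $D+\sum_{i:\,x_i<0}|x_i|\ge v$), apply the induction hypothesis to the shortened list (whose sum is still $D$ and whose magnitudes are still $\le D$), and bound the redistribution cost $\sum_{i:\,x_i\ge 0}\bigl(g(x_i)-g(x_i')\bigr)$ by $g(v)$; that last bound uses superadditivity of $g$ on $[0,\infty)$---immediate from convexity and $g(0)=0$, since $g(a)=g\bigl(\tfrac{a}{a+b}(a+b)+\tfrac{b}{a+b}\cdot 0\bigr)\le\tfrac{a}{a+b}g(a+b)$ and similarly for $b$---together with monotonicity of the difference quotients of $g$, and it is again the pinch point.

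\emph{Reassembly.} Granting the linear and hinge generator cases, linearity of the functional in $g$ plus integration against $\beta$ and $\nu$ yield the inequality for an arbitrary convex $g$ with $g(0)=0$, which proves Proposition~\ref{l1:deltas} and thereby supplies the ingredient needed for the single-step base case (Lemma~1) of Theorem~\ref{theo1}.
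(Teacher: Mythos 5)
Your reduction of the Proposition to the hinge generators $g(x)=\max\{x-\sigma,0\}$ via the integral representation of a convex function vanishing at $0$ is sound as far as it goes, and it is a genuinely different route from the paper, which instead establishes a three-term inequality $g(x_1+x_2-x_3)\ge g(x_1)+g(x_2)-g(x_3)$ (Proposition 4 of the appendix) and then greedily merges the most negative term with two positive terms that dominate it in magnitude, iterating until all terms are positive. But your proposal stops exactly where the content is: you do not prove the hinge inequality $D-\sigma\ \ge\ \sum_{i:x_i\ge 0}\max\{x_i-\sigma,0\}-\sum_{i:x_i<0}\max\{|x_i|-\sigma,0\}$, and you flag that it may require hypotheses beyond those stated. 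That suspicion is correct, and the gap cannot be closed: the Proposition is false as stated. Take $N=4$, $x=(1,1,-\tfrac{1}{2},-\tfrac{1}{2})$, so $D=1$ and $|x_i|\le D$ for every $i$, and $g(x)=x^2$. The right-hand side is $g(1)+g(1)-g(\tfrac{1}{2})-g(\tfrac{1}{2})=\tfrac{3}{2}$, while the left-hand side is $g(1)=1$. Equivalently, your hinge inequality already fails at $\sigma=0.4$: the right side is $0.6+0.6-0.1-0.1=1.0$ while $D-\sigma=0.6$.

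The paper's own argument stumbles on the same configuration: merging $-\tfrac{1}{2}$ with the two $1$'s produces a combined value $\tfrac{3}{2}>D$, after which the remaining $-\tfrac{1}{2}$ can no longer be paired with two positive terms each exceeding it in magnitude, and the inequality one would then need, $g(\tfrac{3}{2}-\tfrac{1}{2})\ge g(\tfrac{3}{2})-g(\tfrac{1}{2})$, is exactly the reverse of what convexity provides. What actually has to save Lemma 1 is additional structure on the depth increments $\Delta d_i$ generated by the rainflow algorithm when a single step of amplitude $P_i$ is added --- roughly, control of the total positive and total negative variation separately, not merely of each individual term and of the net sum --- and neither your bare hypotheses nor the paper's capture it. So the reduction to hinges is a clean idea and would be a perfectly good framework, but as written the proof is incomplete, and no proof from the stated hypotheses alone can exist.
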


Proposition 1 basically says that the function value of a large number $D$ is larger than the sum of function values of N small numbers $x_1,...,x_N$, where $\sum_{i=1}^{N}x_i = D$.  Now, let's consider a step function added to $\lambda \mathbf{s_1}$, where $\mathbf{{s_1}^{'}} = \lambda \mathbf{s_1} + (1-\lambda) P_i U_i$. Suppose the rainflow cycle counting results are,
\begin{align*}
\bd{Rainflow} (\lambda \mathbf{s_1}): & \underbrace{\lambda d_1, \lambda d_2, ..., \lambda d_M, 0, 0,...}_{L}\,,\\
\bd{Rainflow} (\mathbf{s_1}^{'}): & \underbrace{{d_1}^{'}, {d_2}^{'}, ..., {d_N}^{'}, 0, 0,...}_{L}\,,
\end{align*}
where $[d_1, d_2, ..., d_M]$ are cycle counting results for $\mathbf{s_1}$. We add some cycles with 0 depth in the end to ensure that $\bd{Rainflow}(\lambda \mathbf{s_1})$ and $\bd{Rainflow}(\mathbf{s_1}^{'})$ have the same number of cycles. Define $\Delta d_i$ such that,
\begin{equation}
d_i^{'} = \lambda d_i + (1-\lambda) \Delta d_i, \forall i = 1,2,...,L\,,
\end{equation}
We observe that,
\begin{align*}
\left|\sum_{i=1}^{L} \Delta d_i \right|  \leq \left| P_i \right| \mbox{ and }
\left|\Delta d_i\right|  \leq \left|P_i\right|\,,
\end{align*}

When $\sum_{i=1}^{L} \Delta d_i = \left|P_i\right|$, Lemma \ref{lemma:single_step_proof} is directly proved by applying Proposition 1. When $-\left|P_i\right| \leq \sum_{i=1}^{L} \Delta d_i < |P_i|$, we first add some ``virtual cycles'' to make $\sum_{i=1}^{L} \Delta d_i = |P_i|$. Then we prove the original cost $f\big(\lambda \mathbf{s_1} + (1-\lambda)U_i P_i\big)$ is strictly less than the cost after adding the ``virtual cycles''.

%
%

Lemma \ref{lemma:single_step_proof} proved the convexity for base case. Next, we want to prove the induction relation from $K$ to $K+1$ step. Assume that $f(\mathbf{s})$ is convex up to the sum of $K$ step changes (arranged by time index), let's show $f(\mathbf{s})$ is convex up to the sum of $K+1$ step changes, where
\begin{small}
\begin{equation*}
f\left(\lambda \mathbf{s_1} + (1-\lambda) \mathbf{s_1}\right) \leq \lambda f(\mathbf{s_1}) + (1-\lambda) f(\mathbf{s_1})\,, \lambda \in [0,1]\,,
\end{equation*}
\end{small}
where $\mathbf{s_1}, \mathbf{s_1} \in \mathbb{R}^{K+1}$. The following Proposition is needed for the proof.
\begin{proposition}\label{propo:comb1}
	\begin{small}
		\begin{equation*}
		f\left(\sum_{t=1}^{K}X_t U_t\right) \!\geq\!f\left(\sum_{t=1}^{i\!-\!1} X_t U_t \!+\! (X_i\!+\!X_{i+1})U_i\!+\!\sum_{t=i\!+\!2}^{K}X_t U_t\right)\,,
	\end{equation*}
	\end{small}
	In other words, the cycle stress cost will reduce if combining adjacent unit changes.
\end{proposition}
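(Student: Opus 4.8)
The plan is to prove Proposition~\ref{propo:comb1} by reducing the piecewise-constant SoC signal to its turning-point sequence and then splitting on the signs of the two jumps $X_i,X_{i+1}$ being merged. Set $\mathbf{s}=\sum_{t=1}^{K}X_tU_t$ and $\mathbf{s}'=\sum_{t=1}^{i-1}X_tU_t+(X_i+X_{i+1})U_i+\sum_{t=i+2}^{K}X_tU_t$. Two elementary facts get the argument going: (i) $\mathbf{s}$ and $\mathbf{s}'$ agree at every time index except $t=i$, where $\mathbf{s}'$ takes the value $s(i+1)$ instead of $s(i)$ --- merging two adjacent unit changes just ``skips over'' the intermediate SoC level reached at time $i$; and (ii) by the first step of Algorithm~\ref{algorithm:rainflow} the cost $f$ depends only on the sequence of local extrema of its argument, and $f(-\mathbf{s})=f(\mathbf{s})$, which lets me fix the sign pattern of $(X_i,X_{i+1})$ by symmetry. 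If $X_i$ and $X_{i+1}$ are of the same sign (or one of them is zero), then $s(i)$ sits on a monotone run of $\mathbf{s}$, it is not a turning point, and $\mathbf{s}$ and $\mathbf{s}'$ have the same turning-point sequence; hence $f(\mathbf{s})=f(\mathbf{s}')$ and the inequality holds with equality --- this is the ``same direction'' regime noted after Theorem~\ref{theo1}.

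The substantive case is $X_iX_{i+1}<0$; using $f(-\mathbf{s})=f(\mathbf{s})$ I may take $X_i>0>X_{i+1}$, so $t=i$ is a strict local \emph{maximum} of $\mathbf{s}$ with value $b=s(i)$, flanked by $a=s(i-1)<b$ and $c=s(i+1)<b$. Passing to $\mathbf{s}'$ removes this peak from the turning-point sequence: when $c\le a$ it is deleted outright (possibly triggering a short cascade that also removes a now non-extremal neighbour), and when $c>a$ it is lowered to the level $c$. In every case only finitely many cycle depths near time $i$ are affected. I would then compare the rainflow count $[d'_k]$ of $\mathbf{s}'$ with the count $[d_k]$ of $\mathbf{s}$ at these places and --- exactly as in the proof of Lemma~1 --- pad the affected increments with ``virtual'' zero-depth cycles so that they sum to the relevant net SoC displacement, at which point Proposition~\ref{l1:deltas} applies with $g=\Phi$. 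Because $\Phi$ is convex, nondecreasing on $[0,\infty)$ and satisfies $\Phi(0)=0$, this gives $\sum_k\Phi(d'_k)\le\sum_k\Phi(d_k)$, i.e. $f(\mathbf{s})\ge f(\mathbf{s}')$; modulo the case bookkeeping discussed next, this proves the proposition.

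The hard part is that bookkeeping in the opposite-sign case, and in particular the cascading configurations. Deleting the peak at $b$ can re-match the rainflow partners of extrema far from time $i$: an excursion that was previously ``stopped'' by the level $b$ may now run further before being exceeded, so an individual cycle depth elsewhere can actually increase. What rescues the estimate is that any such increase is capped by the depth of a cycle lying strictly below level $b$; once the affected pieces are grouped together with that dropped cycle and padded with virtual cycles so that the hypotheses $\sum_k x_k=D>0$ and $|x_k|\le D$ of Proposition~\ref{l1:deltas} are met, the dropped cycle dominates and the net change is nonpositive. Checking that this grouping can always be arranged --- across every topological type of the turning-point sequence, and including the boundary cases $i=1$ and $i=K-1$ where the endpoint half-cycle conventions of Algorithm~\ref{algorithm:rainflow} come in --- is the technical heart, and the detailed verification I would defer to the Appendix. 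Combining it with the same-sign case proves Proposition~\ref{propo:comb1}.
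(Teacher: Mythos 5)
Your reduction to the turning-point sequence and your same-sign case (where the extrema, and hence the cost, are unchanged) match the paper exactly, and your observation that deleting or lowering the extremum at $t=i$ can re-match rainflow partners far from $i$ --- so that an individual cycle elsewhere can actually get \emph{deeper} --- is correct and important. For the extrema sequence $0,6,2,8,1,7,0$, merging the two jumps that create and leave the peak at $8$ yields $0,6,1,7,0$: the full cycle $(6,2)$ of depth $4$ becomes the full cycle $(6,1)$ of depth $5$, even though the total cost still decreases (the depth-$6$ full cycle disappears and the residual half-cycles shrink from depth $8$ to depth $7$). This already shows that a naive ``every affected depth shrinks, or exactly one cycle of depth $|X_i|$ is dropped with everything else unchanged'' argument is too optimistic, and that some grouping/majorization input of the kind you invoke via Proposition~\ref{l1:deltas} is genuinely needed in the opposite-sign case.

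The difficulty is that this is precisely where your proof stops. The load-bearing claim --- that every increase in a far-away cycle depth is capped by the depth of a cycle that disappears, and that the perturbed depths can always be grouped, with virtual zero-depth padding, so that the hypotheses $\sum_k x_k = D>0$ and $|x_k|\le D$ of Proposition~\ref{l1:deltas} hold and the net change in $\sum_k\Phi(d_k)$ is nonpositive --- is asserted and then explicitly ``deferred to the Appendix.'' That verification, across all configurations of the turning-point sequence and the endpoint half-cycle conventions, \emph{is} the content of the proposition; without it you have established only the easy (same-sign) half. For contrast, the paper's own proof does not route through Proposition~\ref{l1:deltas} at all: it cases on the sign of $X_{i-1}$ and on whether $|X_{i+1}|\le|X_i|$, and argues that the raised local minimum only shallows the cycles containing $t=i$, or that a single full cycle of depth $|X_i|$ is lost, so that monotonicity of $\Phi$ alone suffices; it never confronts the re-pairing phenomenon your example-type reasoning exposes. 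Your strategy is the more careful one, but to make it a proof you must actually supply the grouping argument (for instance, by showing the new depth multiset is dominated, in the sense required by Proposition~\ref{l1:deltas}, by the old one), rather than defer it.
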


Recall the step function decomposition results for $\mathbf{s_1}$, $\mathbf{s_2}$ and $\lambda \mathbf{s_1} + (1-\lambda) \mathbf{s_2}$ in (\ref{eq:dep_sum_proof}). There are three cases when we go from $T=K$ to $T=K+1$, classified by the value and sign of $X_{K}$, $X_{K+1}$.

\noindent \textbf{Case 1:} $X_{K}$ and $X_{K+1}$ are same direction.

If $X_{K+1}$ and $X_{K}$ are same direction, we could move $X_{K+1}$ to the previous step without affecting the total cost $f(\mathbf{s_1} + (1-\lambda) \mathbf{s_2})$. Then we prove the $K+1$ convexity by applying Proposition \ref{propo:comb1}.
\begin{align*}
		& f\left(\lambda \mathbf{s_1} + (1-\lambda) \mathbf{s_2}\right) \nonumber\\
		= & f\left(\lambda \mathbf{s_1}^{K} + (1-\lambda) \mathbf{s_2}^{K} + X_{K+1}U_{K}\right) \nonumber\\
		= & f\left\{\lambda \mathbf{s_1}^{K} + (1-\lambda) \mathbf{s_2}^{K} + [\lambda Q_{K+1} + (1-\lambda) P_{K+1}] U_{K}\right\}\nonumber\\
		\leq & \lambda f\left(\mathbf{s_1}^{K}+ Q_{K+1}U_{K}\right) + (1-\lambda)f\left(\mathbf{s_2}^{K}+ P_{K+1}U_{K}\right) \nonumber\\
		\leq & \lambda f(\mathbf{s_1}) + (1-\lambda)f(\mathbf{s_2}) \ (\text{by Lemma \ref{lemma:single_step_proof}})
\end{align*}

\noindent \textbf{Case 2:} $X_{K}$ and $X_{K+1}$ are different directions, with $|X_{K}| \geq |X_{K+1}|$. In this case, the last step $X_{K+1}$ could be separated out from the previous SoC profile. Therefore
\begin{small}
\begin{align*}
	& f\left(\lambda \mathbf{s_1} + (1-\lambda) \mathbf{s_2}\right) & \nonumber\\
	= & f\left(\lambda \mathbf{s_1}^{K} + (1-\lambda) \mathbf{s_2}^{K}\right) + \Phi\left(X_{K+1} U_{K+1}\right) \nonumber\\
	\leq & \lambda f(\mathbf{s_1}^{K}\!)\!+\!(1\!-\!\lambda)f(\mathbf{s_2}^{K}\!)\!+\! \Phi\left[\lambda Q_{K\!+\!1} U_{K\!+\!1}\!+\!(1\!-\!\lambda) P_{K\!+\!1}U_{K\!+\!1}\right] \nonumber\\
	\leq & \lambda \big[f(\mathbf{s_1}^{K}\!) \!+\! \Phi(Q_{K\!+\!1}U_{K+1}\!)\big]\!+\! (1\!-\!\lambda)\big[f(\mathbf{s_2}^{K}\!) \!+\! \Phi(P_{K\!+\!1} U_{K+1})\big] & \nonumber\\
	\leq &\lambda f(\mathbf{s_1}) + (1-\lambda)f(\mathbf{s_2}) &
\end{align*}
\end{small}
\noindent \textbf{Case 3:} $X_{K}$ and $X_{K+1}$ are different directions, with $|X_{K}| < |X_{K+1}|$. In such condition, $X_{K+1}$ is not easily separated out from previous SoC. It further contains three cases.
\begin{itemize}
\item $X_{K-1}$ and $X_{K}$ are the same direction. We could use the same ``trick" in Case 1 to combine step $K-1$ and $K$. Proof is trivial for this case.

\item $X_{K-1}$ and $X_{K}$ are different directions, $X_{K}$ and $X_{K+1}$ form a cycle that is \emph{separate} from the rest of the signal (eg. it is the deepest cycle). We can separate $X_{K+1}$ out, and proof will be similar to Case 2.

\item $X_{K-1}$ and $X_{K}$ are different directions, $X_{K}$ and $X_{K+1}$ do not form a separate cycle. This condition is the most complicated case, since it's hard to move $X_{K+1}$ to the previous step, or separate it out. Therefore, we need to look into $Q_K$, $Q_{K+1}$, $P_K$, $P_{K+1}$ in order to show the $K+1$ step convexity. It further contains four sub-cases as given in Fig. \ref{Fig:c4}, showing convexity for each sub-case finishes the overall convexity proof.
\begin{figure}[!ht]
	\centering
	\includegraphics[width= 0.6\columnwidth]{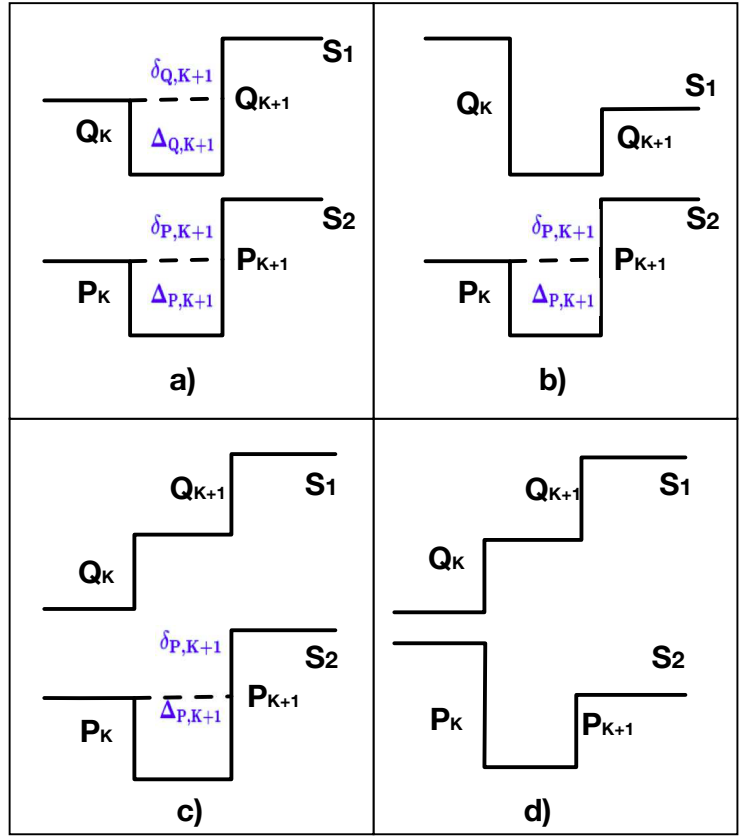}
	\caption{Four cases of $Q_K$, $Q_{K+1}$, $P_K$, $P_{P+1}$}
	\label{Fig:c4}
\end{figure}
\end{itemize}
\end{proof}

\section{Conclusion}
\label{sec:con}
Battery operational cost modeling is very important for BES planning and operation.
An ideal cost model needs to both capture the fundamental battery degradation, and be easy to incorporate in optimization and computational tractable to solve. Cycle-based degradation models, mainly using the rainflow algorithm for cycle counting is electrochemically accurate. However, it does not have a closed form thus hard to be optimized. In this paper, we prove the rainflow cycle-based degradation cost is convex (with respect to charging/discharging power). Convexity enables the degradation model easy to be incorporated and guarantee the solution quality. We provide a subgradient solver algorithm. The proposed degradation model and subgradient algorithm have a broad application scope in various battery planning and operation optimization problems. From the case study in frequency regulation market, we verified the proposed degradation model can significantly improve the operational utility and extend battery lifetime. In future, we will apply the proposed degradation model to more BES applications besides frequency regulation.

\bibliographystyle{IEEEtran}
\bibliography{dc_bib}

\newpage
\section{Appendix}
\label{sec:appendix}
\setcounter{lemma}{0}
\setcounter{theorem}{0}
\setcounter{proposition}{0}
Here we provide a detailed proof that the rainflow cycle life loss model is convex. We use $\bd s$ to denote the battery SoC profile, and the battery degradation cost based on the rainflow method is denoted by f, which is defined by,
\begin{equation}
f(\bd s) = \sum_{i =1}^{N} \Phi(d_i)\,,
\label{equ:cost}
\end{equation}
where $\Phi$ is a convex depth of discharge (DoD) stress function, $N$ is the number charging/discharging cycles, and $d_i$ is the depth of the ith charging/discharging cycle. 

\begin{theorem}{Cost Model Convexity}
	
	The rainflow cycle-based battery life loss model $f(\bd s) = \sum_{i=1}^{N} \Phi(d_i)$, where $[d_1, d_2,...,d_{N}] = \bd {Rainflow} (\bd s)$ is convex given function $\Phi(\cdot)$ is convex. In mathematical form, $\forall \bd s_1, \bd s_2 \in \mathbb{R}^{T}$:
	\begin{small}
		\begin{equation}
		f(\lambda \bd s_1 + (1-\lambda) \bd s_2) \leq \lambda f(\bd s_1) + (1-\lambda) f(\bd s_2), \forall \lambda \in [0,1]
		\end{equation}
	\end{small}
	\label{thm_append}
\end{theorem}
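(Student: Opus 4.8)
The plan is to prove the inequality by induction on the number $T$ of time steps, following the three-step scheme of Section~\ref{sec:convexity} but filling in every case. Throughout, I identify a SoC profile with its vector of increments through the unit-step decomposition, writing $\bd s_1=\sum_{i=1}^{T}Q_iU_i$, $\bd s_2=\sum_{i=1}^{T}P_iU_i$, and $\lambda\bd s_1+(1-\lambda)\bd s_2=\sum_{i=1}^{T}X_iU_i$ with $X_i=\lambda Q_i+(1-\lambda)P_i$. The fact to exploit repeatedly is that $\bd{Rainflow}(\cdot)$, and hence $f(\cdot)$, depends only on the sequence of turning points of the increment vector, so that inserting, deleting, or merging increments changes the multiset of cycle depths in a controlled way.

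The first block of work is the analytic input, Proposition~\ref{l1:deltas}. I would prove it from two elementary facts about a convex $g$ with $g(0)=0$: that $t\mapsto g(t)/t$ is nondecreasing, which makes $g$ superadditive on $[0,\infty)$ so that merging nonnegative terms only increases $\sum g$; and that $g(t)\le(t/D)\,g(D)$ for $0\le t\le D$. Sweeping through the $x_i$ in order and repeatedly cancelling a negative term against the running positive surplus — which the magnitude and partial-sum bounds inherited from the rainflow structure keep inside $[0,D]$ — strictly reduces the number of terms until only the endpoint value $D$ remains, which is the trivial equality case.

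Next I would establish the base case of the structural induction, Lemma~\ref{lemma:single_step_proof}: $f$ is convex when one argument is a single step $P_iU_i$ (this also covers $\bd s_1,\bd s_2\in\mathbb R^{1}$, where $f=\Phi\circ|\cdot|$ is convex outright). Writing $\bd{Rainflow}(\lambda\bd s_1)=(\lambda d_1,\dots,\lambda d_M,0,\dots)$ and $\bd{Rainflow}(\lambda\bd s_1+(1-\lambda)P_iU_i)=(d_1',\dots,d_L',0,\dots)$, padded to a common length with zero-depth cycles, and setting $d_k'=\lambda d_k+(1-\lambda)\Delta d_k$, I would first verify the structural bounds $|\Delta d_k|\le|P_i|$ and $\bigl|\sum_k\Delta d_k\bigr|\le|P_i|$ by tracking how bumping the SoC from time $i$ onward reshuffles turning points. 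When $\sum_k\Delta d_k=|P_i|$, applying Proposition~\ref{l1:deltas} with $g=\Phi$, $D=|P_i|$, $x_k=\Delta d_k$ and then coordinatewise convexity of $\Phi$ gives the claim; when $-|P_i|\le\sum_k\Delta d_k<|P_i|$, I would append ``virtual'' cycles to force the equality $\sum_k\Delta d_k=|P_i|$, show that the genuine cost is no larger than the augmented one using monotonicity of $\Phi$, and fall back to the equality case.

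Finally, the induction step from $T=K$ to $T=K+1$: assuming convexity for profiles supported on $K$ steps, I would split on the last two increments $X_K,X_{K+1}$ of $\lambda\bd s_1+(1-\lambda)\bd s_2$. If they point the same way, the point between them is not a turning point, so $X_{K+1}$ slides onto step $K$ without changing $f$, and Proposition~\ref{propo:comb1} together with Lemma~\ref{lemma:single_step_proof} close it. If they point oppositely with $|X_K|\ge|X_{K+1}|$, the last step forms a shallow cycle that detaches additively, so $f$ of the whole equals $f$ of the first $K$ steps plus $\Phi(|X_{K+1}|)$, and the induction hypothesis plus convexity of $\Phi$ finish it. The genuinely hard case is opposite directions with $|X_K|<|X_{K+1}|$, and within it the sub-case where $X_{K-1},X_K$ also point oppositely and $X_K,X_{K+1}$ do not form a cycle separable from the rest of the signal: here the last step is entangled with the global rainflow structure, and one must work through the sub-cases determined by the signs and magnitudes of $Q_{K-1},Q_K,Q_{K+1}$ and $P_{K-1},P_K,P_{K+1}$ (the four pictures of Fig.~\ref{Fig:c4}), comparing the turning-point sequences of $\bd s_1$, $\bd s_2$ and their convex combination depth-by-depth and reassembling the inequality from the lemmas above. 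This last case carries essentially all the bookkeeping and is the main obstacle.
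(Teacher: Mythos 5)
Your proposal reproduces the paper's own strategy almost exactly: the same unit-step decomposition, the same base case (the padded cycle lists, the bounds $|\Delta d_k|\le|P_i|$ and $\bigl|\sum_k\Delta d_k\bigr|\le|P_i|$, and the virtual-cycle padding, exactly as in Lemma~\ref{append_lemma1}), the same superadditivity route to the key inequality of Proposition~\ref{append_c5}, and the same three-way case split on $X_K,X_{K+1}$ in the induction step, including the merging trick justified by Proposition~\ref{append_c7}. Up to that point the plan is sound and matches the appendix.

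The gap is in the last sub-case of Case 3 ($X_K,X_{K+1}$ opposite with $|X_K|<|X_{K+1}|$, $X_{K-1},X_K$ opposite, and $X_K,X_{K+1}$ not forming a separable cycle). You describe this as bookkeeping to be ``reassembled from the lemmas above,'' but the lemmas available at that point do not suffice. The paper needs two further ingredients there: first, a reduction showing that sub-case (b) of Fig.~\ref{fig_append_4case} implies sub-case (d), obtained by merging $Q_K$ into $Q_{K+1}$ and observing that this can only increase $f$ of the convex combination; second, and more importantly, a new transfer inequality (Proposition 8 of the appendix): for convex increasing $g$ and $a>b+\delta>b>0$, one has $g(a)+g(b)\ge g(a-\delta)+g(b+\delta)$. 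This is applied only after an algebraic regrouping in which the quantity $\delta=\frac{\lambda}{1-\lambda}(\bar Q_K-Q_{K+1})$ is shifted between the cycle of $\bd s_2$ containing $\bd s_2(K+1)$ and the cycle of depth $\bar P_K$; that is, one must redistribute increment mass \emph{between the two profiles}, not merely compare their turning-point sequences depth-by-depth. The earlier tools (superadditivity at zero, Proposition~\ref{append_c5}, single-step convexity) all point in the opposite, ``merging'' direction and cannot produce this smoothing step. Without identifying the transfer inequality and the $\frac{\lambda}{1-\lambda}$ regrouping, the hardest case of the induction is named but not closed.
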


\subsection{Step function decomposition}
	First, we introduce the step function decompostion of SOC signal.
	Notice that any SoC series $\bd S$, could be written out as a finite sum of step functions:
	\begin{align}
	\bd s = \sum_{i =1}^{T} P_i u(t-i)\,, \label{eq:dep_s1}
	\end{align}
	where u(t-i) is the unit step function and $P_i$ is the signal amplitude.
	\begin{align}
	u(t-i) = 
	\begin{cases}
	1& \text{t $\geq$ i}\\
	0 & \text{otherwise}
	\end{cases}, \ \ \forall t = 1,2,...,T
	\end{align}
	
	For notation convenience, we use $U_i$ to denote $u(t-i)$. Fig. \ref{fig_append_decom} gives an example of the step function decomposition method. An SoC signal $s_1$ is  decomposed into 4 step functions $P_1U_1$, $P_2U_2$, $P_3U_3$ and $P_4U_4$.
\begin{figure}[H]
	\centering
	\includegraphics[width= 0.7 \columnwidth]{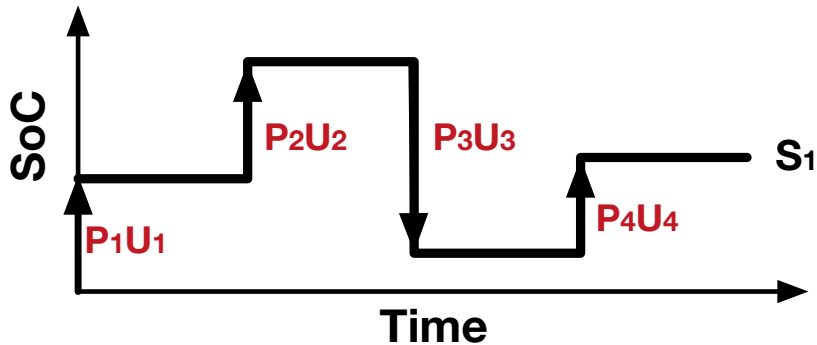}
	\caption{Step function decomposition of $S_1$}
	\label{fig_append_decom}
\end{figure}

\subsection{Single step change convexity}
Since all SoC profile can be written as the sum of step functions, by induction method, we first need to prove that $f(\bd s)$ is convex up to a step function as base case. 

\begin{lemma}{Rainflow Convexity for single step}
	
	The rainflow cycle-based degradation cost function is convex up to one step change of the orginal SoC profile, where
	$$f\big(\lambda \bd s_1 + (1-\lambda)P_i U_i\big) \leq \lambda f(\bd s_1) + (1-\lambda) f(P_i U_i)\,, \lambda \in [0,1]$$
	\label{append_lemma1}
\end{lemma}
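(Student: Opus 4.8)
The plan is to prove Lemma \ref{append_lemma1} by reducing the single-step perturbation to a statement about how rainflow cycle depths change when one unit step $P_iU_i$ is mixed in, and then invoking Proposition \ref{l1:deltas}. First I would set up the two rainflow decompositions
\begin{align*}
\bd{Rainflow}(\lambda\bd s_1) &: \lambda d_1,\lambda d_2,\dotsc,\lambda d_M,0,0,\dotsc \quad(\text{length }L)\,,\\
\bd{Rainflow}(\lambda\bd s_1+(1-\lambda)P_iU_i) &: d_1',d_2',\dotsc,d_N',0,0,\dotsc \quad(\text{length }L)\,,
\end{align*}
padding both with zero-depth cycles so they have equal length $L$, and writing $d_i' = \lambda d_i + (1-\lambda)\Delta d_i$. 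The crucial structural observation — which I would justify by walking through Algorithm \ref{algorithm:rainflow} — is that inserting a single jump of amplitude $P_i$ at time $i$ perturbs the cycle depths by amounts $\Delta d_i$ satisfying $\bigl|\sum_{i=1}^L \Delta d_i\bigr| \le |P_i|$ and $|\Delta d_i| \le |P_i|$ for each $i$; morally, the ``extra'' extremum introduced by the step can only be absorbed into existing half-cycles or create new ones whose total signed depth is bounded by the size of the jump.

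Next I would split into two regimes according to the value of $\sum_{i=1}^L \Delta d_i$. In the boundary case $\sum_{i=1}^L \Delta d_i = |P_i|$, the inequality $f(\lambda\bd s_1+(1-\lambda)P_iU_i) \le \lambda f(\bd s_1) + (1-\lambda)f(P_iU_i)$ follows directly: by convexity of $\Phi$ together with $\Phi(0)=0$, one has $\Phi(d_i') = \Phi(\lambda d_i + (1-\lambda)\Delta d_i) \le \lambda\Phi(d_i) + (1-\lambda)\Phi(\Delta d_i)$ for $\Delta d_i \ge 0$ (and a companion bound using $\Phi(d_i') \le \lambda\Phi(d_i)$ style estimates when $\Delta d_i < 0$, since a deeper reference cycle dominates), and summing over $i$ while applying Proposition \ref{l1:deltas} with $D = |P_i| = \sum \Delta d_i$ yields $\sum_i \Phi(\Delta d_i) \le \Phi(|P_i|) = f(P_iU_i)$ after the negative terms are moved to the correct side. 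For the general case $-|P_i| \le \sum_{i=1}^L \Delta d_i < |P_i|$, I would introduce ``virtual cycles'' of total depth $|P_i| - \sum_i \Delta d_i$, appended so that the augmented collection has signed sum exactly $|P_i|$, reducing to the boundary case; the final ingredient is a monotonicity argument showing $f$ of the original signal is no larger than $f$ of the signal with these virtual cycles inserted — intuitively, removing cycle content can only decrease the total stress cost since each $\Phi(d) \ge 0$ is nondecreasing on $[0,\infty)$.

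I expect the main obstacle to be the structural claim that a single inserted step perturbs the rainflow output only through increments $\Delta d_i$ with $|\Delta d_i| \le |P_i|$ and $\bigl|\sum \Delta d_i\bigr| \le |P_i|$. This requires a careful case analysis of how the new turning point interacts with the recursive half-cycle pairing in Algorithm \ref{algorithm:rainflow}: the jump may lie strictly inside a monotone run (changing nothing but possibly splitting a run), may create a new local extremum that spawns a new half-cycle, or may shift which extremum is global, cascading through the pairing. The bookkeeping that all these cases respect the two bounds — and the parallel bookkeeping that handles $\Delta d_i$ of either sign when invoking convexity of $\Phi$ with $\Phi(0)=0$ — is the technical heart of the argument; the virtual-cycle padding and the final appeal to Proposition \ref{l1:deltas} are comparatively routine once that is in place.
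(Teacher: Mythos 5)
Your proposal follows essentially the same route as the paper's own proof: the same padding of the two rainflow outputs to a common length $L$, the same definition of the increments $\Delta d_i$ with the bounds $\lvert\sum_{i=1}^{L}\Delta d_i\rvert\le \lvert P_i\rvert$ and $\lvert\Delta d_i\rvert\le\lvert P_i\rvert$ (the paper's Proposition~\ref{append_c6}), the same split of the sum by the sign of $\Delta d_i$ using convexity of $\Phi$ with $\Phi(0)=0$, the same appeal to Proposition~\ref{l1:deltas}, and the same virtual-cycle padding for the case $\sum_i\Delta d_i<\lvert P_i\rvert$. You also correctly identify the structural claim about how a single step perturbs the cycle depths as the technical heart, which is precisely the part the paper itself treats only informally.
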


In order to prove the Lemma \ref{append_lemma1}, we need the following propositions.
\begin{proposition}
	Let $g(\cdot)$ be a convex function where $g(0) = 0$. Let $x_1$, $x_2$ be positive real numbers. Then
	$$g(x_1+x_2) \geq g(x_1)+g(x_2)\,,$$
	\label{append_c1}
\end{proposition}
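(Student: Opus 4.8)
The plan is to realize each summand on the right-hand side as a convex combination of the two ``endpoints'' $0$ and $x_1+x_2$, and then let convexity together with the normalization $g(0)=0$ do all the work. Since $x_1,x_2>0$ we have $x_1+x_2>0$, so the weights $\tfrac{x_1}{x_1+x_2}$ and $\tfrac{x_2}{x_1+x_2}$ are well defined, nonnegative, and sum to one — this is the only place the positivity hypothesis is used.

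First I would write the identity $x_1 = \tfrac{x_1}{x_1+x_2}(x_1+x_2) + \tfrac{x_2}{x_1+x_2}\cdot 0$ and apply the definition of convexity of $g$ to get
\[
g(x_1) \;\le\; \tfrac{x_1}{x_1+x_2}\,g(x_1+x_2) + \tfrac{x_2}{x_1+x_2}\,g(0) \;=\; \tfrac{x_1}{x_1+x_2}\,g(x_1+x_2),
\]
the last equality being $g(0)=0$. By the symmetric computation with the roles of $x_1$ and $x_2$ exchanged, $g(x_2) \le \tfrac{x_2}{x_1+x_2}\,g(x_1+x_2)$. Adding the two inequalities and using $\tfrac{x_1}{x_1+x_2}+\tfrac{x_2}{x_1+x_2}=1$ yields $g(x_1)+g(x_2)\le g(x_1+x_2)$, which is exactly the claimed superadditivity.

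There is essentially no hard step: the statement is a one-line consequence of the definition of convexity, and the only thing needing a word of care is that the two convex combinations above are genuine (weights in $[0,1]$, denominators nonzero), which is guaranteed by $x_1,x_2>0$. If a more ``geometric'' phrasing is preferred, one can instead invoke the standard monotonicity of secant slopes of a convex function through the origin — $t\mapsto g(t)/t$ is nondecreasing on $(0,\infty)$ — so that $g(x_i)/x_i \le g(x_1+x_2)/(x_1+x_2)$ for $i=1,2$, and clearing denominators and adding gives the result. This proposition, together with its $N$-term extension obtained by a trivial induction on $N$, is what is then fed into the proof of Lemma~\ref{append_lemma1}.
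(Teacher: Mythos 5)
Your proof is correct and is essentially identical to the paper's: both realize $x_1$ and $x_2$ as convex combinations of $0$ and $x_1+x_2$, apply convexity, use $g(0)=0$, and add the two resulting inequalities. Nothing further is needed.
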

\begin{proof}
	By convexity of $g$, we have 
	$$\frac{x_1}{x_1+x_2}g(x_1+x_2) + \frac{x_2}{x_1+x_2}g(0) \geq g(x_1)\,,$$
	and
	$$\frac{x_2}{x_1+x_2}g(x_1+x_2) + \frac{x_1}{x_1+x_2}g(0) \geq g(x_2)\,,$$
	Adding the two equations finish the proof.
\end{proof}

\begin{proposition}
	Let $g(\cdot)$ be a convex function where $g(0) = 0$. Let $x_1$, $x_2$ be positive real numbers, and $x_1 \geq x_2$. Then
	$$g(x_1-x_2) \leq g(x_1)-g(x_2)\,, $$
	\label{append_c2}
\end{proposition}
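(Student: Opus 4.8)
The plan is to derive this as an immediate consequence of Proposition~\ref{append_c1}, which already establishes the superadditivity $g(y_1+y_2) \geq g(y_1)+g(y_2)$ for a convex $g$ with $g(0)=0$ and positive arguments $y_1,y_2$. First I would set $a := x_1 - x_2$ and $b := x_2$, so that $a \geq 0$, $b > 0$, and $a+b = x_1$. The target inequality $g(x_1-x_2) \leq g(x_1) - g(x_2)$ is then exactly $g(a) \leq g(a+b) - g(b)$, i.e.\ $g(a+b) \geq g(a) + g(b)$, so the whole statement is just Proposition~\ref{append_c1} rewritten.

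When $a > 0$ this is literally Proposition~\ref{append_c1} applied to $y_1 = a$, $y_2 = b$, and the proof finishes after rearranging terms. The only case needing separate attention is the degenerate one $a = 0$, i.e.\ $x_1 = x_2$: there the claim reads $g(0) \leq g(x_1) - g(x_1) = 0$, which holds with equality since $g(0)=0$. So the argument splits into these two short lines.

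If a self-contained argument is preferred (not invoking Proposition~\ref{append_c1}), I would instead write $a$ and $b$ each as a convex combination of $0$ and $a+b$: since $b = x_2 > 0$ we have $a+b > 0$, so $\tfrac{a}{a+b}$ and $\tfrac{b}{a+b}$ are valid weights in $[0,1]$, and convexity together with $g(0)=0$ gives $g(a) \leq \tfrac{a}{a+b}\,g(a+b)$ and $g(b) \leq \tfrac{b}{a+b}\,g(a+b)$. Adding these two inequalities yields $g(a)+g(b) \leq g(a+b)$, which is precisely what is required.

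There is essentially no hard step here: the result is a one-line corollary of the already-proved Proposition~\ref{append_c1}. The only points to be careful about are (i) the boundary case $x_1 = x_2$, where the helper proposition does not directly apply but the desired inequality is trivial, and (ii) in the self-contained version, confirming that the denominator $a+b$ is strictly positive so the convex-combination weights are well defined — both of which are guaranteed by the hypothesis $x_2 > 0$.
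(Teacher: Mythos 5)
Your proposal is correct and follows essentially the same route as the paper: both reduce the claim to Proposition~\ref{append_c1} via the substitution $x_1 = (x_1 - x_2) + x_2$. Your explicit treatment of the boundary case $x_1 = x_2$ is a small refinement the paper omits, but it does not change the argument.
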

\begin{proof}
	By Proposition \ref{append_c1}, $$g(x+y) \geq g(x)+g(y), \forall x, y>0\,,$$
	Let $x = x_1-x_2>0$, $y = x_2 >0$, so that
	$$g(x_1-x_2+x_2)  \geq g(x_1-x_2)+g(x_2)\,,$$
	Q.E.D.
\end{proof}
\begin{proposition}
	Let $g(\cdot)$ be a convex function where $g(0) = 0$. Let $x_1 \geq x_2 >0$ be positive real numbers. Then
	$$g(\frac{1}{2} x_1- \frac{1}{2} x_2) \leq \frac{1}{2} g(x_1) - \frac{1}{2} g(x_2)\,,$$
\end{proposition}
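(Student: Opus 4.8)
The plan is to obtain the inequality directly by chaining the two preceding propositions, rather than returning to the definition of convexity. Set $y := \frac{1}{2} x_1 - \frac{1}{2} x_2$, so that $y \geq 0$ since $x_1 \geq x_2 > 0$, and observe that the claim is equivalent to $g(y) \leq \frac{1}{2}\big(g(x_1) - g(x_2)\big)$.

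First I would invoke Proposition \ref{append_c2} with its two arguments taken to be $x_1$ and $x_2$; the hypothesis $x_1 \geq x_2 > 0$ is exactly what that proposition requires, and it yields $g(x_1 - x_2) \leq g(x_1) - g(x_2)$, i.e. $g(2y) \leq g(x_1) - g(x_2)$.

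Next, in the generic case $x_1 > x_2$ we have $y > 0$, so I would apply Proposition \ref{append_c1} with both of its arguments equal to $y$, which gives $g(2y) = g(y + y) \geq 2 g(y)$. Combining the two bounds yields $2 g(y) \leq g(2y) \leq g(x_1) - g(x_2)$, and dividing by $2$ gives the claim. The remaining boundary case $x_1 = x_2$ is handled separately and trivially: there $y = 0$, and both sides of the asserted inequality equal $g(0) = 0$ by hypothesis.

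I do not expect a genuine obstacle here; it is a short composition of the two lemmas just established. The only points needing care are the sign and ordering conditions under which Propositions \ref{append_c1} and \ref{append_c2} may legitimately be applied (all arguments strictly positive, and the correct ordering for Proposition \ref{append_c2}), together with the separate but immediate treatment of the degenerate case $x_1 = x_2$, where Proposition \ref{append_c1} no longer applies but the inequality holds by inspection.
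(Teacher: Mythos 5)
Your proof is correct, and it takes a genuinely different route from the paper's. The paper first applies Proposition~\ref{append_c2} at the halved points to get $g(\tfrac{1}{2}x_1 - \tfrac{1}{2}x_2) \le g(\tfrac{1}{2}x_1) - g(\tfrac{1}{2}x_2)$, and then closes the remaining gap to $\tfrac{1}{2}g(x_1) - \tfrac{1}{2}g(x_2)$ by introducing $h(z) = g(\tfrac{1}{2}z) - \tfrac{1}{2}g(z)$ and arguing $h'(z) \le 0$ --- a monotonicity argument that implicitly assumes $g$ is differentiable. You instead apply Proposition~\ref{append_c2} at the original points to get $g(2y) \le g(x_1) - g(x_2)$ with $y = \tfrac{1}{2}(x_1 - x_2)$, and then use the superadditivity of Proposition~\ref{append_c1} in the form $g(2y) = g(y+y) \ge 2g(y)$ to halve. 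The chain $2g(y) \le g(2y) \le g(x_1) - g(x_2)$ is valid, the positivity and ordering hypotheses of both propositions are checked, and the degenerate case $x_1 = x_2$ is disposed of correctly. What your version buys is that it needs no derivative of $g$ and no auxiliary function, so it is shorter and strictly more elementary. What it does not give is the general-$\lambda$ extension $g(\lambda x_1 - (1-\lambda)x_2) \le \lambda g(x_1) - (1-\lambda)g(x_2)$, which the paper's proof gestures at via continuity and which is what is actually invoked later in the proof of Lemma~\ref{append_lemma1}; your argument is tied to the midpoint $\lambda = \tfrac{1}{2}$ because it relies on $\lambda x_1 - (1-\lambda)x_2$ being a scalar multiple of $x_1 - x_2$. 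As a proof of the proposition as literally stated, however, it is complete.
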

\begin{proof}
	From Proposition \ref{append_c2}, 
	$$g(\frac{1}{2} x_1- \frac{1}{2}x_2) \leq g(\frac{1}{2} x_1) - g(\frac{1}{2}x_2), \forall x_1 \geq x_2 >0$$
	Therefore, it suffices to show, 
	$$g(\frac{1}{2} x_1) - g(\frac{1}{2} x_2) \leq \frac{1}{2} g(x_1) - \frac{1}{2} g(x_2)\,,$$
	Define $h(z) = g(\frac{1}{2} z)- \frac{1}{2} g(z)$, 
	$$h'(z) = \frac{1}{2} g^{'}\big(\frac{1}{2} z\big) - \frac{1}{2} g\big(z\big) = \frac{1}{2} [g^{'}\big(\frac{1}{2} z\big)-g^{'}\big(z\big)]< 0\,,$$
	$h(\cdot)$ is a monotone decreasing function. For $x_1 \geq x_2>0$,
	\begin{align*}
	h(x_1) & \leq h(x_2)\\
	g(\frac{1}{2} x_1)- \frac{1}{2} g(x_1) & \leq g(\frac{1}{2} x_2)- \frac{1}{2} g(x_2)\,,\\
	g(\frac{1}{2} x_1) - g(\frac{1}{2} x_2) & \leq \frac{1}{2} g(x_1) - \frac{1}{2} g(x_2)
	\end{align*}
	
	If $g(\cdot)$ is continous, we can generalize the midpoint property to a more broad $\lambda$,
	$$g(\lambda x_1- (1-\lambda) x_2) \leq \lambda g(x_1) - (1-\lambda) g(x_2), \forall \lambda x_1\geq  (1-\lambda) x_2 > 0$$ 
	Q.E.D.
\end{proof}

\begin{proposition}
	Let $g(\cdot)$ be a convex function where $g(0) = 0$. Let $x_1, x_2, x_3$ be positive real numbers, which satisfy that $x_1+x_2-x_3 \geq 0$, and $x_i \leq x_1+x_2-x_3, \forall i \in [1,2,3]$. Then
	$$g(x_1+x_2-x_3) \geq g(x_1)+g(x_2) - g(x_3)\,,$$
	\label{append_c4}
\end{proposition}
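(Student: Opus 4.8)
The plan is to collapse the desired inequality into a short chain built from the two elementary facts already proved in the appendix, namely Proposition~\ref{append_c1} (superadditivity: $g(a+b)\ge g(a)+g(b)$ for $a,b>0$) and Proposition~\ref{append_c2} (subadditivity of differences: $g(a-b)\le g(a)-g(b)$ for $a\ge b>0$). Write $D:=x_1+x_2-x_3$, so the goal is $g(D)\ge g(x_1)+g(x_2)-g(x_3)$. First I would unpack the hypothesis $x_i\le D$: the inequality $x_1\le D$ is equivalent to $x_3\le x_2$, and $x_2\le D$ is equivalent to $x_3\le x_1$; in particular the third assumption $x_3\le D$ is redundant, since it already follows from $x_3\le x_1$ and $x_3\le x_2$. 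Thus the real content of the hypotheses is $0<x_3\le x_1$ and $0<x_3\le x_2\le D$. If $x_1=x_3$ then $D=x_2$ and both sides of the claim equal $g(x_1)+g(x_2)-g(x_3)=g(x_2)$, so the statement is an equality; from here on I may assume $x_1>x_3$, which forces $D-x_2=x_1-x_3>0$.

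Next I would run the two links of the chain. Applying Proposition~\ref{append_c2} to the pair $(D,x_2)$ — legitimate because $D\ge x_2>0$ — yields $g(D-x_2)\le g(D)-g(x_2)$, i.e. $g(x_1-x_3)\le g(D)-g(x_2)$. Then applying Proposition~\ref{append_c1} to the positive numbers $x_1-x_3$ and $x_3$ gives $g(x_1)=g\big((x_1-x_3)+x_3\big)\ge g(x_1-x_3)+g(x_3)$, i.e. $g(x_1)-g(x_3)\le g(x_1-x_3)$. Concatenating the two bounds, $g(x_1)-g(x_3)\le g(x_1-x_3)\le g(D)-g(x_2)$, which rearranges to exactly $g(x_1+x_2-x_3)\ge g(x_1)+g(x_2)-g(x_3)$, completing the argument.

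There is no serious obstacle here; the only thing requiring care is the bookkeeping of which orderings among the $x_i$ are genuinely implied by the raw hypothesis $x_i\le D$, together with the degenerate case $x_1=x_3$, where the strict-positivity premise of Propositions~\ref{append_c1}--\ref{append_c2} fails but the conclusion holds trivially with equality. As a sanity check / alternative viewpoint, one can observe that $D-x_2=x_1-x_3=:\delta\ge0$ with $x_2\ge x_3$, so the target is equivalent to $g(x_2+\delta)-g(x_2)\ge g(x_3+\delta)-g(x_3)$ — the monotonicity of the increment $a\mapsto g(a+\delta)-g(a)$ for convex $g$, which is immediate from the nondecreasing-chord-slope characterization of convexity. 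I would keep the chained-Proposition version in the paper, since it stays entirely within the elementary toolkit already assembled in this section.
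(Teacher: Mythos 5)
Your proof is correct, but it takes a genuinely different route from the paper's. The paper proves this proposition directly via the mean value theorem: assuming (WLOG) $x_1\ge x_2$, it writes $g(x_1+x_2-x_3)-g(x_1)=(x_2-x_3)g'(\theta_1)$ and $g(x_2)-g(x_3)=(x_2-x_3)g'(\theta_2)$ with $\theta_2\le x_2\le x_1\le\theta_1$, and concludes from monotonicity of $g'$. You instead chain the two previously established lemmas --- superadditivity (Proposition~\ref{append_c1}) applied to $x_1-x_3$ and $x_3$, and the subtractive bound (Proposition~\ref{append_c2}) applied to the pair $(x_1+x_2-x_3,\,x_2)$ --- to get $g(x_1)-g(x_3)\le g(x_1-x_3)\le g(x_1+x_2-x_3)-g(x_2)$. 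Both arguments are at bottom the ``increasing increments'' property of convex functions ($g(a+\delta)-g(a)$ nondecreasing in $a$), as your closing remark observes. What your version buys is that it avoids the paper's implicit differentiability assumption (the MVT step with $g'$ is not justified for a merely convex $\Phi$), it reuses the toolkit already built in the appendix rather than introducing a new technique, and it handles the boundary case $x_1=x_3$ explicitly, where the strict-positivity hypotheses of Propositions~\ref{append_c1}--\ref{append_c2} would otherwise fail. Your bookkeeping of the hypotheses (that $x_1\le D$ and $x_2\le D$ are equivalent to $x_3\le x_2$ and $x_3\le x_1$, and that $x_3\le D$ is redundant) matches the first line of the paper's proof and is accurate.
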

\begin{proof}
	From $x_1 \leq x_1+x_2-x_3$ we have $x_2 \geq x_3$. From $x_2 \leq x_1+x_2-x_3$, we have $x_1 \geq x_3$
	
	Let's further assume $x_1 \geq x_2$,
	$$g(x_1+x_2-x_3)-g(x_1) = (x_2-x_3) \cdot g^{'}(\theta_1), \theta_1 \in [x_1,x_1+x_2-x_3]\,,$$
	$$g(x_2)-g(x_3) = (x_2-x_3) \cdot g^{'}(\theta_2), \theta_2 \in [x_3,x_2]\,,$$
	Since $g(\cdot)$ is a convex function, for $\theta_2 \leq x_2 \leq x_1 \leq \theta_1$, we have $g^{'}(\theta_2) \leq g^{'}(\theta_1)$.
	Therefore, 
	$$g(x_2)-g(x_3) \leq g(x_1+x_2-x_3)-g(x_1)\,,$$
	$$g(x_1+x_2-x_3) \geq g(x_1)+g(x_2)-g(x_3)$$
	
	If $x_1 < x_2$, similarly we have
	$$g(x_1)-g(x_3) \leq g(x_1+x_2-x_3)-g(x_2)\,,$$
	$$g(x_1+x_2-x_3) \geq g(x_1)+g(x_2)-g(x_3)$$
	Q.E.D
\end{proof}

\begin{proposition}
	Let $g(\cdot)$ be a convex function where $g(0) = 0$. Let $x_1, x_2, x_3, ..., x_n$ be real numbers, suppose 
	\begin{itemize}
		\item $\sum_{i=1}^{n}x_i = D >0$
		\item $|x_i|  \leq D, \forall i \in \{1,2,3,...,n\}$
	\end{itemize}
	Then,
	$$g(\sum_{i=1}^{n} x_i) \geq \sum_{\{i: x_i \geq 0\}} g(x_i)  - \sum_{\{i: x_i < 0\}} g(|x_i|)\,,$$
	\label{append_c5}
\end{proposition}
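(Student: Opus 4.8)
The plan is to argue by induction on $n$, using Proposition~\ref{append_c1} to handle the configurations in which all the $x_i$ share a sign, and Propositions~\ref{append_c2} and~\ref{append_c4} to control how the right-hand side moves when two of the $x_i$ are merged into one.

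First I would clear away the easy reductions. Any index with $x_i = 0$ contributes $g(0) = 0$ to the right-hand side and changes neither $D$ nor the bounds $|x_j| \le D$, so such indices may be deleted; assume henceforth that every $x_i \ne 0$. If every $x_i > 0$, then $\sum_i x_i = D$ with each $x_i \le D$, and applying Proposition~\ref{append_c1} repeatedly (merging the positive numbers two at a time) gives $\sum_i g(x_i) \le g\!\left(\sum_i x_i\right) = g(D)$, which is exactly the claim. This serves both as the base of the induction ($n=1$ being the trivial $g(D)\ge g(D)$) and as the all-positive branch at every later stage.

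For the inductive step, suppose $n \ge 2$ and at least one $x_q < 0$; since $\sum_i x_i = D > 0$ there is also at least one $x_p > 0$. The clean move is to choose such a pair with $x_p \le |x_q|$ and replace $\{x_p, x_q\}$ by the single number $s = x_p + x_q$, whose magnitude is $|x_q| - x_p \le |x_q| \le D$; the reduced $(n-1)$-tuple then still sums to $D$ and still obeys the uniform bound, so the induction hypothesis bounds its right-hand side by $g(D)$. It remains to check the merge does not shrink the right-hand side: the pair's contribution changes from $g(x_p) - g(|x_q|)$ to $-g(|x_q| - x_p)$, and the required inequality $g(|x_q| - x_p) \le g(|x_q|) - g(x_p)$ is precisely Proposition~\ref{append_c2}. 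So whenever such a pair exists the induction closes.

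The hard part will be the case where this move is unavailable, namely when every positive entry strictly exceeds the magnitude of every negative one. Then no positive can be absorbed into a negative in one step, and one must instead first combine entries of the same sign — coalesce two positives, or apply Proposition~\ref{append_c4} to a positive--positive--negative triple — while ensuring the coalesced value still satisfies $|x_i| \le D$ \emph{and} that the right-hand side still moves in the favorable direction. This is exactly where the bookkeeping becomes delicate: a careless combination can push a merged entry above $D$ and invalidate the induction hypothesis, so the choice of which entries to combine has to be made carefully, and one should also check precisely which sign patterns of the $x_i$ can actually occur under the hypotheses. Apart from this combinatorial case analysis, all of the convex-analysis content is already supplied by Propositions~\ref{append_c1}, \ref{append_c2}, and~\ref{append_c4} — superadditivity of $g$ on the positive axis, the difference inequality, and the three-term balance that is the $n=3$ instance of the present claim.
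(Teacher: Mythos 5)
Your first reduction is sound and is genuinely different from the paper's: you absorb a positive $x_p$ into a negative $x_q$ with $|x_q|\ge x_p$ and use Proposition 2 to show the right-hand side does not decrease under the merge, whereas the paper sorts the entries, takes the \emph{most negative} one, and absorbs it into \emph{two} positives (or two grouped sums of positives) via the three-term inequality of Proposition 4, iterating until all entries are positive. However, the case you flag as ``delicate'' --- every positive entry strictly larger in magnitude than every negative entry --- is not a bookkeeping nuisance; it is where the statement itself fails. Take $g(x)=x^2$ and $(x_1,x_2,x_3,x_4)=(9,9,-4,-4)$, so that $D=10$ and $|x_i|\le D$ for every $i$. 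The claimed inequality reads $g(10)\ge g(9)+g(9)-g(4)-g(4)$, i.e.\ $100\ge 130$, which is false. So no choice of combinations will close this case, and your hesitation was the correct instinct: the proposal has a genuine gap there, and it cannot be filled under the stated hypotheses.

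The same example breaks the paper's own argument at exactly that spot: merging the two positives with the most negative entry produces $9+9-4=14>D$, so the invariant $|x_i|\le D$ needed to iterate is destroyed, and the leftover pair $(14,-4)$ admits neither two positives $\ge 4$ nor a valid grouping $y_1\ge y_2\ge 4$. The proposition can only be rescued by importing additional structure on the $x_i$ beyond ``real numbers summing to $D$ with $|x_i|\le D$'' (in the intended application they are the cycle-depth perturbations $\Delta d_i$ caused by adding a single step of height $D$ to the SoC profile, which are far more constrained); any complete proof, yours or the paper's, would have to identify and use that extra structure explicitly.
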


\begin{proof}
	(1) If all $x_n$'s are positive, it is trivial to show $g(\sum_{i=1}^{n} x_i) \geq \sum_{i} g(x_i)$ by Proposition \ref{append_c1}.
	
	(2) If $x_n$ contains both positive and negative numbers, we order them in an ascending order and renumber them as,
	
	$$x_1 \leq x_2 \leq ... \leq 0 \leq ... \leq x_{n}\,,$$ 
	
	Pick $x_1$ (the most negative number), and find some positive $x_i$, $x_{i+1}$ such that 
	$$x_{i+1} \geq x_{i} \geq |x_1| >0\,,$$
	
	Applying Proposition \ref{append_c4}, we have 
	$$g(x_{i+1}+x_{i}+x_{1}) = g(x_{i+1} + x_{i} - |x_1|) \geq g(x_{i+1}) +g (x_{i}) - g(|x_1|)\,,$$
	
	Note, if we can not find such $x_{i}$, $x_{i+1}$, eg. $x_n < |x_1|$. We could group a bunch of \emph{postive} $x_i$'s to form two new variables $y_1 = \sum_{i \in N_1} x_i$, $y_2 = \sum_{i \in N_2} x_i$ where $N_1 \cap N_2 = \emptyset$.  For sure there exists such $y_1 \geq y_2 \geq |x_1|$, since
	\begin{align*}
	|\sum_{i=1}^{n} x_i| & = |\sum_{i: x_i \geq 0} x_i + \sum_{j: x_j < 0, j \neq 1} x_j + x_1| \\
	& =  \sum_{i: x_i \geq 0} x_i  - |\sum_{j: x_j < 0, j \neq 1} x_j| - |x_1|\\
	& = D
	\end{align*}	
	$$\sum_{i: x_i \geq 0} x_i = |\sum_{j: x_j < 0, j \neq 1} x_j| + |x_1| + D \geq 2|x_1|$$
	
	Applying Proposition \ref{append_c4},
	\begin{align*}
	g(y_{1}+y_{2}+x_{1}) & = g(y_{1} + y_{2} - |x_1|)\\
	& \geq g(y_{1}) +g (y_{2}) - g(|x_1|)\\
	& \geq \sum_{i \in N_1}  g(x_i) + \sum_{j \in N_2} g(x_j) - g(|x_1|), N_1 \cap N_2 = \emptyset
	\end{align*}
	
	Define $x_1^{'} = x_{i+1}+x_{i}+x_{1} > 0$ and re-order $x_1^{'}, x_2, x_3, ..., x_{i-1}, x_{i+2},...,x_{n}$. 
	
	Or define ${x_1^{'}}= y_{1}+y_{2}+x_{1} > 0$, re-order $\big\{x_i: i \neq 1, i \notin N_1 \cup N_2\big\}, x_1^{'}$
	
	Repeat the above steps till all ${x_i}^{'}$ are postive and finish the proof.
\end{proof}

\begin{proposition}
	Consider a step change added to $\bd s_1$, where ${\bd s_1}^{'}(t) = \bd s_1(t)+P_i U_i$, $t \in [0,T]$. Suppose $P_i$ is positive \footnote{The proof for negative $P_i$ is the same, just change $P_i$ to $\left|P_i\right|$}, the rainflow cycle decomposition results for $\bd s_1$ and ${\bd s_1}^{'}$ are,

	$$\bd S_1: d_1, d_2,..., d_m, ..., d_M\,,$$
	$${\bd S_1}^{'}: {d_1}^{'}, {d_2}^{'},..., {d_n}^{'}, ..., {d_N}^{'}\,,$$
	
	Define $L = max(M,N)$, we could re-write the cycles in $\bd s_1$ and ${\bd s_1}^{'}$ as,
	
	$$\bd s_1: \underbrace{d_1, d_2, ..., d_M, 0, 0,...}_{L}\,,$$
	$${\bd s_1}^{'}: \underbrace{{d_1}^{'}, {d_2}^{'}, ..., {d_N}^{'}, 0, 0,...}_{L}\,,$$	
	
	Define $\Delta d_i$ such that,
	$${d_i}^{'} = d_i + \Delta d_i\,,\forall i = 1,2,...,L$$
	
	The following relations always holds,
	\begin{equation}
	\left|\sum_{i=1}^{L} \Delta d_i\right| \leq P_i\,,
	\end{equation}
	\begin{equation}
	|\Delta d_i| \leq P_i\,,
	\end{equation}
\label{append_c6}
\end{proposition}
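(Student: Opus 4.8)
The plan is to prove the two inequalities by separate routes: the bound on $\left|\sum_i\Delta d_i\right|$ comes from a total-variation identity for rainflow counting, while the bound on each individual $\left|\Delta d_i\right|$ requires tracking how the cycle structure is re-arranged at the single point where the step is inserted. Throughout I take $P_i>0$ (the case $P_i<0$ is identical after replacing $P_i$ by $\left|P_i\right|$, and $i=1$ is trivial since then $P_iU_i$ is a constant shift changing neither the turning-point sequence nor the rainflow output). The elementary fact underlying everything is that adding $P_iU_i$ leaves $\bd s_1$ unchanged at times $1,\dots,i-1$ and raises it uniformly by $P_i$ at times $i,\dots,T$; hence the turning-point (local-extremum) sequence of ${\bd s_1}'$ is obtained from that of $\bd s_1$ by shifting the suffix that begins at the first extremum at or after time $i$ upward by $P_i$ and re-reducing only at the resulting ``junction'' between the shifted suffix and the unshifted prefix.

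For the sum bound I would use the following basic property of rainflow counting (adopting the convention of Section~\ref{sec:rainflow2} that a full cycle is two half-cycles of equal depth, so that $[d_1,\dots,d_M]$ is a list of half-cycle depths): $\sum_i d_i=\mathrm{TV}(\bd s):=\sum_{t=2}^{T}\left|s(t)-s(t-1)\right|$. This is immediate from Algorithm~\ref{algorithm:rainflow}: after reduction to turning points, every reversal of the sequence is consumed by exactly one half-cycle whose depth is that reversal's magnitude, so the depths sum to the total variation of the turning-point sequence, which equals $\mathrm{TV}(\bd s)$ because the reduction step preserves total variation. Applying this to both $\bd s_1$ and ${\bd s_1}'$ (the zero-padding does not affect the sums) gives $\sum_i\Delta d_i=\mathrm{TV}({\bd s_1}')-\mathrm{TV}(\bd s_1)=\left|a+P_i\right|-\left|a\right|$, where $a:=s_1(i)-s_1(i-1)$ is the only summand of the total variation that changes, and $\big|\,\left|a+P_i\right|-\left|a\right|\,\big|\le P_i$ by the triangle inequality. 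This is the first inequality.

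For the per-term bound I would construct an explicit matching of the zero-padded lists $[d_1,\dots,d_L]$ and $[d_1',\dots,d_L']$ under which matched depths differ by at most $P_i$. Using the local characterization of rainflow cycles~\cite{rychlik1987new}, I would classify the cycles into four types: (i) those determined entirely from samples before time $i$, which are unchanged and matched to themselves; (ii) those determined entirely from samples at or after time $i$, which are merely translated by $P_i$, hence of unchanged depth, and matched to themselves; (iii) those whose defining maximum and minimum straddle time $i$, for which exactly one of the two defining extrema is shifted by $P_i$, so the depth changes by exactly $\pm P_i$; and (iv) the cycles created or destroyed by the re-reduction at the junction, which I claim all have depth at most $P_i$ and which I match to the padding zeros, again giving $\left|\Delta d\right|\le P_i$. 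The delicate point is type (iv): raising the post-$i$ suffix can push $s_1(i)+P_i$ above one or more extrema immediately preceding time $i$, which re-arranges the rainflow pairing near the junction; I would show that only a bounded number of ``small'' cycles of depth $\le P_i$ are affected and that the re-pairing moves every surviving depth by at most $P_i$. The cleanest way to organize this is strong induction on the number of turning points of $\bd s_1$: isolate the at-most-one oscillation that the shift absorbs or introduces at the junction as a single cycle of depth $\le P_i$, and apply the induction hypothesis to the shorter signal that remains.

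The main obstacle is exactly this junction bookkeeping for the per-term bound: pinning down how the turning-point sequence --- and with it the rainflow pairing --- transforms in the ``absorption'' case, and checking the two depth inequalities in the handful of resulting sub-cases. Each sub-case is governed by the relative order of $s_1(i-1)$, $s_1(i)+P_i$, and the last one or two extrema of $\bd s_1$ before time $i$, and collapses to an elementary inequality of the form $\left|P_i-2\delta\right|\le P_i$ for $0\le\delta\le P_i$; these are the same configurations that appear as the four cases of $Q_K,Q_{K+1},P_K,P_{K+1}$ in Case~3 of the convexity proof, so the bookkeeping set up here is reused there.
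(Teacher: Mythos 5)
Your route differs from the paper's in a substantive way, and for the first inequality it is arguably the stronger argument. The paper proves both bounds at once by an ``infinitesimal perturbation'' device: for a small enough increment $\Delta P$ only one cycle depth changes, and by at most $\Delta P$; the bounds are then obtained by ``integrating'' over $P_i$. That argument is fragile precisely because the set of cycles (and which cycle absorbs the change) is rearranged discontinuously as the step amplitude grows, which the paper does not address. Your total-variation identity $\sum_i d_i = \mathrm{TV}(\bd s)$ sidesteps this entirely: since only the single increment $s_1(i)-s_1(i-1)$ is altered, $\sum_i \Delta d_i = |a+P_i|-|a|$ and the sum bound is immediate and matching-independent. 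Two caveats: (i) the identity requires your stated convention that a full cycle is two half-cycles of equal depth -- the paper is inconsistent on this (Section~\ref{sec:rainflow2} counts E-F-E$'$ once as $d_7$, while Section~\ref{sec:subgradient} splits everything into charging/discharging half-cycles), so you should verify the convention actually used in Lemma~\ref{append_lemma1}, where the Proposition is invoked; (ii) the per-term bound $|\Delta d_i|\le P_i$ is only meaningful relative to a specific pairing of the two cycle lists, and your explicit four-type matching makes this precise where the paper leaves it implicit.

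On the per-term bound, however, your proposal is still a plan rather than a proof, and it is incomplete at exactly the same place the paper's is: the type~(iv) junction bookkeeping (cycles created, destroyed, or re-paired when $s_1(i)+P_i$ overtakes extrema preceding time $i$) is asserted to produce only depth changes of magnitude at most $P_i$, not demonstrated. Your type~(iii) claim that straddling cycles change by exactly $\pm P_i$ also presupposes that their rainflow pairing survives the shift, which is precisely what type~(iv) may violate; the two cases have to be analyzed together in the induction on turning points. This is the genuine content of the Proposition, and carrying out that case analysis (the configurations governed by the relative order of $s_1(i-1)$, $s_1(i)+P_i$, and the neighboring extrema) is what would be needed to complete either your argument or the paper's.
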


\begin{proof}
There exists a small enough $\Delta P$ such that only one cycle depth $d_i$ will change.

$$|\Delta d_i| \leq \Delta P\,,$$
$$-\Delta P \leq \Delta d_i \leq \Delta P\,,$$

Consider $P_i$ as a cumulation of small $\Delta P$, by the principle of integration, we have 
$$-\int \Delta P dp \leq \sum_{i=1}^{L} \Delta d_i \leq \int \Delta P dp\,,$$

Such that,
$$\left|\sum_{i=1}^{L} \Delta d_i\right| \leq P_i$$

$|\Delta d_i| \leq P_i$ holds for the worst case where all cycle depth changes happen at one certain cycle. Therefore, it is trivial to show that $|\Delta d_i| \leq P_i$ hold in all conditions.

Q.E.D.
\end{proof}

Now, by Proposition 1-6, we finish the following proof of Lemma 1.
\begin{proof}
	Let's consider $\bd s^{'}(t) = \lambda \bd s_1(t) + (1-\lambda)P_i U_i, t \in [0,T]$. Then the rainflow cycle decomposition results for $\lambda \bd s_1$ and $\bd s^{'}$ are
	\begin{align*}
	\lambda \bd s1: & \underbrace{\lambda d_1, \lambda d_2, ..., \lambda d_M, 0, 0,...}_{L} \\
	{\bd s_1}^{'}: & \underbrace{{d_1}^{'}, {d_2}^{'}, ..., {d_N}^{'}, 0, 0,...}_{L}
	\end{align*}
Define $\Delta d_i$ such that,
$${d_i}^{'} = \lambda d_i + (1-\lambda) \Delta d_i\,,\forall i = 1,2,...,L$$
\begin{subequations}
	\begin{align}
	\begin{split}
	& f\big(\lambda \bd s_1 + (1-\lambda)P_i U_i\big) \\
	= & \sum_{i=1}^{L} \Phi \big(\lambda d_i + (1-\lambda)\Delta d_i\big) \\
	= & \sum_{i=1}^{l^{+}} \underbrace{\Phi \big(\lambda d_i + (1-\lambda)\Delta d_i\big)}_{\Delta d_i \geq 0}  + \sum_{i=1}^{l^{-}} \underbrace{\Phi \big(\lambda d_i - (1-\lambda)|\Delta d_i|\big)}_{\Delta d_i < 0}\\
	\leq & \sum_{i=1}^{l^{+}} [\lambda \Phi(d_i) + (1-\lambda) \Phi(\Delta d_i)] + \sum_{i=1}^{l^{-}} [\lambda \Phi(d_i) - (1-\lambda) \Phi(|\Delta d_i|)] \\
	\leq & \lambda \sum_{i=1}^{L} \Phi(d_i) + (1-\lambda) \big[\sum_{i=1}^{l^{+}} \Phi(\Delta d_i) - \sum_{i=1}^{l^{-}} \Phi(|\Delta d_i|)\big]
	\end{split}
	\end{align}
\end{subequations}

By Proposition \ref{append_c6}, we have
$$|\sum_{i=1}^{L}  \Delta d_i| \leq P_i\,,$$
$$|\Delta d_i| \leq P_i\,,$$

(1) Assume $\sum_{i=1}^{L} \Delta d_i = P_i$, $|\Delta d_i| \leq P_i$. By Proposition \ref{append_c5},
\begin{small}
\begin{align*}
& f\big(\lambda \bd s_1 + (1-\lambda)P_i U_i\big) \\
\leq & \lambda \sum_{i=1}^{L} \Phi(d_i) + (1-\lambda) \big[\sum_{i=1}^{l^{+}} \Phi(\Delta d_i) - \sum_{i=1}^{l^{-}} \Phi(|\Delta d_i|)\big]\\
\leq  & \lambda \sum_{i=1}^{L} \Phi(d_i) + (1-\lambda) \Phi(\sum_{i=1}^{L}  \Delta d_i)\\
= & \lambda \sum_{i=1}^{L} \Phi(d_i) + (1-\lambda) \Phi(P_i)
\end{align*}
\end{small}

(2) Assume $-P_i \leq \sum_{i=1}^{L} \Delta d_i <P_i$, $|\Delta d_i| \leq P_i$. 

Add some ``virtual cycles'' $d_{L+1}^{'}, d_{L+2}^{'}, ..., d_{L+K}^{'}$ at the end of ${\bd s_1}^{'}$, each $d_{L+i}^{'}$ is positive and satisfies that $|d_{L+i}^{'}| \leq P_i$. So that $\sum_{i=1}^{L+K} \Delta d_i = P_i$, $|\Delta d_i| \leq P_i, \forall i \in [1,2,...,L+K]$. Write 0 at the end of $\lambda \bd s_1$ to achieve the same cycle number.
\begin{align*}
	\lambda \bd s1: & \underbrace{\lambda d_1, \lambda d_2, ..., \lambda d_M, 0, 0, 0,...,0}_{L+K} \\
	\bd s_1^{'}: & \underbrace{{d_1}^{'}, {d_2}^{'}, ..., {d_N}^{'}, 0, 0,...,0, d_{L+1}^{'}, d_{L+2}^{'}, ..., d_{L+K}^{'}}_{L+K}
\end{align*}

\resizebox{.95\linewidth}{!}{
	\begin{minipage}{\linewidth}
\begin{align*}
& f\big(\lambda \bd s_1 + (1-\lambda)P_i U_i\big) \\
\leq &\lambda \sum_{i=1}^{L} \Phi(d_i) + (1-\lambda) \big[\sum_{i=1}^{l^{+}} \Phi(\Delta d_i) - \sum_{i=1}^{l^{-}} \Phi(|\Delta d_i|)\big]\\
<& \lambda \sum_{i=1}^{L} \Phi(d_i) + (1-\lambda) \big[\sum_{i=1}^{l^{+}} \Phi(\Delta d_i) + \sum_{i=L+1}^{L+K} \Phi(\Delta d_i) - \sum_{i=1}^{l^{-}} \Phi(|\Delta d_i|)\big]\\
\leq & \lambda \sum_{i=1}^{L} \Phi(d_i) + (1-\lambda) \Phi(\sum_{i=1}^{L+K}  \Delta d_i)\\
= & \lambda \sum_{i=1}^{L} \Phi(d_i) + (1-\lambda) \Phi(P_i)
\end{align*}
\end{minipage}
}

To sum up, 
\begin{small}
\begin{align}
f\big(\lambda \bd s_1 + (1-\lambda)P_i U_i\big) & \leq \lambda \sum_{i=1}^{L} \Phi(d_i) + (1-\lambda) \Phi(P_i) \nonumber\\
& = \lambda f(\bd s_1) + (1-\lambda) f(P_i U_i)\,, 
\end{align}
\end{small}

where $\lambda \in [0,1]$. Thus, $f(\bd s)$ is convex up to every step change in $\bd s$. 
\end{proof}

Lemma 1 shows that $f(\bd s)$ is convex up to every step change in $\bd s$. Next, we will prove the general rainflow convexity by induction method.

\subsection{General rainflow cycle life loss convexity}
%
%
%
		We will prove the general rainflow convexity by induction.
		
		\emph{I). Initial condiction:} By Lemma 1, $K=1$
		$$f\big(\lambda \bd s_1 + (1-\lambda)\bd s_2\big) \leq\lambda f(\bd s_1) + (1-\lambda) f(\bd s_2)\,, \lambda \in [0,1]$$
		
		II). Suppose that, $f(\bd s)$ is convex up to the sum of $K$ step changes (arranged by time index)
		$$f\big(\lambda \bd s_1 + (1-\lambda) \bd s_2\big) \leq \lambda f(\bd s_1) + (1-\lambda) f(\bd s_2)\,, \lambda \in [0,1], \bd s_1, \bd s_2 \in \mathbb{R}^{K}$$
		
		Then we prove $f(\bd s)$ is convex up to the sum of $K+1$ step changes (see Fig. \ref{fig_induction}),
		$$f\big(\lambda \bd s_1 + (1-\lambda) \bd s_2\big) \leq \lambda f(\bd s_1) + (1-\lambda) f(\bd s_2)\,, \lambda \in [0,1], \bd s_1, \bd s_2 \in \mathbb{R}^{K+1}$$

The following proposition is needed for the proof.
\begin{proposition}\label{append_c7}
		\begin{equation}
		f(\sum_{t=1}^{K}X_t U_t) \geq f(\sum_{t=1}^{i-1} X_t U_t + (X_i+X_{i+1})U_i + \sum_{t=i+2}^{K} X_t U_t)\,,
		\end{equation}
		In other words, the cycle stress cost will reduce if combining adjacent unit changes.
\end{proposition}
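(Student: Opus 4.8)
The plan is to argue that combining two adjacent step changes $X_i U_i$ and $X_{i+1}U_{i+1}$ into a single step change $(X_i+X_{i+1})U_i$ can only decrease the total rainflow cost. The natural approach is to track how the sequence of turning points — and hence the rainflow cycle decomposition — changes when we perform this merge, and to bound the cost difference cycle by cycle using the elementary convexity facts already established (Propositions \ref{append_c1}--\ref{append_c5}, especially superadditivity $g(x+y)\ge g(x)+g(y)$ and the subtraction inequality $g(x-y)\le g(x)-g(y)$ for a convex $g$ with $g(0)=0$).

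First I would split into the easy case and the hard case according to the signs of $X_i$ and $X_{i+1}$. If $X_i$ and $X_{i+1}$ have the \emph{same} sign, then the point at time $i$ is not a turning point of $\sum_t X_t U_t$ at all (the signal is monotone across it), so deleting it and replacing the two consecutive moves by their sum leaves the full sequence of turning points — and therefore the entire rainflow decomposition and the cost — literally unchanged; equality holds. If $X_i$ and $X_{i+1}$ have \emph{opposite} signs, say $|X_i|\ge|X_{i+1}|$ without loss of generality, then in the original signal there is a local extremum at time $i$ that creates a small cycle of depth $|X_{i+1}|$ (or contributes $|X_{i+1}|$ to some half-cycle), and this feature disappears after the merge while the net displacement $X_i+X_{i+1}$, which still has the sign of $X_i$, gets absorbed into whatever larger cycle or half-cycle the segment around time $i$ belonged to. Concretely, I would show that the multiset of rainflow depths for the merged signal is obtained from that of the original signal by removing a depth $|X_{i+1}|$ and replacing some depth $d$ (the ``host'' cycle through this location) by $d-|X_{i+1}|$ — or, in the half-cycle/endpoint boundary situations, by an analogous local modification. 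Then $\Phi(d-|X_{i+1}|)\le \Phi(d)-\Phi(|X_{i+1}|)$ by Proposition \ref{append_c2} (valid since $d\ge|X_{i+1}|$ because the host cycle contains this excursion), which gives exactly $f(\text{merged})\le f(\text{original})$.

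The main obstacle is the careful bookkeeping of \emph{which} cycle is the ``host'' and verifying that the claimed local surgery on the depth multiset is actually what the rainflow algorithm produces — the algorithm's behavior near the merged location depends on whether time $i$ sits in the interior of a half-cycle, at a junction of two cycles, or near the global extremum, and the cleanest route is probably to invoke the characterization of rainflow cycles via the turning-point sequence (Rychlik's formulation, \cite{rychlik1987new}) and argue that deleting one turning point with the specified monotonicity pattern perturbs the pairing of local maxima and minima only locally. I would organize this as: (i) reduce to turning points; (ii) handle same-sign (equality) and the degenerate cases where $X_i+X_{i+1}=0$; (iii) in the opposite-sign case, identify the host excursion and show $d\ge |X_{i+1}|$; (iv) apply Proposition \ref{append_c2} to conclude. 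I expect step (iii)/(iv)'s interface with the endpoint half-cycle cases to require the most care, but no case should need anything beyond the convexity inequalities already proved.
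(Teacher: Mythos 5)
Your overall strategy --- splitting on the signs of $X_i$ and $X_{i+1}$, observing that the turning-point sequence (hence the cost) is unchanged when they agree, and performing a local surgery on the rainflow depth multiset bounded by the elementary convexity inequalities when they disagree --- is the same route the paper takes. The genuine gap is the reduction ``say $|X_i|\ge|X_{i+1}|$ without loss of generality,'' which is not a valid WLOG and discards precisely the case the paper has to work hardest on. The merge always places the combined jump $(X_i+X_{i+1})U_i$ at time $i$, and the surrounding context ($X_{i-1}$ before, $X_{i+2}$ after) is not symmetric under swapping the roles of the two jumps; you neither invoke nor prove any symmetry of the rainflow decomposition (e.g.\ time reversal invariance of the depth multiset) that would license the reduction. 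When $|X_{i+1}|>|X_i|$ the local extremum at time $i$ is destroyed rather than merely raised, and the paper's proof shows this branch genuinely requires a further split on the direction of $X_{i-1}$: if $X_{i-1}$ and $X_i$ point the same way, the extremum migrates to $t=i-1$ with reduced prominence; if they point opposite ways, an entire full cycle of depth $|X_i|$ disappears. Neither outcome is an instance of the surgery you describe, so the proposal as written simply omits this half of the proof.

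A second, smaller inaccuracy: even in the case you do treat, the multiset is not always modified by ``remove one depth $|X_{i+1}|$ and replace a host $d$ by $d-|X_{i+1}|$.'' When the excursion at time $i$ is a nested full cycle sitting inside a longer monotone half-cycle --- the generic rainflow situation --- the host's depth is unchanged and a full cycle of depth $|X_{i+1}|$ is simply deleted, so the inequality follows from $\Phi\ge 0$ and monotonicity rather than from Proposition~\ref{append_c2}. You hedge about ``analogous local modifications,'' and each realizable modification does decrease the cost, but enumerating them correctly (together with the missing $|X_{i+1}|>|X_i|$ branch) is exactly the content of the paper's argument; as written, the proposal leaves that essential work undone.
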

\begin{proof}
	Rainflow cycle counting algorithm only considers local extreme points.
	
	I) If $X_i$ and $X_{i+1}$ are the same direction, combining them doesn't affect the value of local extreme points. Therefore the left side cost equals right side cost.
	
	II) If $X_i$ and $X_{i+1}$ are different directions, suppose $X_i$ is negative and $X_{i+1}$ positive (otherwise the same). Time $t=i$ makes a local minimum point.
	\begin{itemize}
	\item Case a: If $|X_{i+1}| \leq |X_{i}|$, combining them will raise the value of local minimum point $i$, thus reducing the depth of cycles which contains $i$. Therefore, the cost after combining is less than the original cost.
	\item Case b: If $|X_{i+1}| > |X_{i}|$, combining them will lead to the removal of local minimum point $i$. 
	
	In one case, if $X_{i-1}$ and $X_i$ are the same direction, time $t=i-1$ will make a local minimum point taking the place of time $t=i$. Therefore, the magnitude of the local minimum point decreases, similar to case a, the total cost after combining is less than the original cost.
	
	In the other case, if $X_{i-1}$ and $X_i$ are different directions, we lose a full cycle with depth $|X_i|$ fater combining. So the cost after combining is also less than the original.
	\end{itemize}
	To sum up, the cycle stress cost will reduce if combining adjacent unit changes.
\end{proof}
	
Recall the step function decomposition results for $\bd s_1$, $\bd s_2$ and $\lambda \bd s_1 + (1-\lambda) \bd s_2$.
\begin{align*}
& \bd s_1 = \sum_{i =1}^{T} Q_i U_i \,,\\
& \bd s_2 = \sum_{i =1}^{T} P_i U_i \,, \\
& \lambda \bd s_1 + (1-\lambda) \bd s_2  = \sum_{i =1}^{T} X_i U_i\,, 
\end{align*}
There are three cases when we go from $T=K$ to $T=K+1$, classfied by the value and symbols of $X_{K}$, $X_{K+1}$.

\noindent \textbf{Case 1:} $X_{K}$ and $X_{K+1}$ are same direction. 
\begin{proof}
If $X_{K+1}$ and $X_{K}$ are same direction, we could move $X_{K+1}$ to the previous step without affecting the total cost $f(\bd s_1 + (1-\lambda) \bd s_2)$. Then we prove the $K+1$ convexity by applying Proposition \ref{append_c7}. 
\begin{align*}
	& f\left(\lambda \mathbf{s_1} + (1-\lambda) \mathbf{s_2}\right) \nonumber\\
	= & f\left(\lambda \mathbf{s_1}^{K} + (1-\lambda) \mathbf{s_2}^{K} + X_{K+1}U_{K}\right) \nonumber\\
	= & f\left\{\lambda \mathbf{s_1}^{K} + (1-\lambda) \mathbf{s_2}^{K} + [\lambda Q_{K+1} + (1-\lambda) P_{K+1}] U_{K}\right\}\nonumber\\	
	\leq & \lambda f\left(\mathbf{s_1}^{K}+ Q_{K+1}U_{K}\right) + (1-\lambda)f\left(\mathbf{s_2}^{K}+ P_{K+1}U_{K}\right) \nonumber\\
	\leq & \lambda f(\mathbf{s_1}) + (1-\lambda)f(\mathbf{s_2}) \ (\text{by Lemma \ref{lemma:single_step_proof}})
\end{align*}
\end{proof}

\noindent \textbf{Case 2:} $X_{K}$ and $X_{K+1}$ are different directions, with $|X_{K}| \geq |X_{K+1}|$. 
\begin{proof}
In this case, the last step $X_{K+1}$ could be seperated out from the previous SoC profile. Therefore
\begin{align*}
	& f\left(\lambda \mathbf{s_1} + (1-\lambda) \mathbf{s_2}\right) & \nonumber\\
	= & f\left(\lambda \mathbf{s_1} + (1-\lambda) \mathbf{s_2}^{K}\right) + \Phi\left(X_{K+1} U_{K+1}\right) \nonumber\\
	\leq & \lambda f(\mathbf{s_1}^{K}) + (1-\lambda)f(\mathbf{s_2}^{K}) + \Phi\left[\lambda Q_{K+1} U_{K+1} + (1-\lambda) P_{K+1}U_{K+1}\right] & \nonumber\\
	\leq & \lambda \big[f(\mathbf{s_1}^{K}) + \Phi(Q_{K+1}U_{K+1})\big] + (1-\lambda)\big[f(\mathbf{s_2}^{K})  + \Phi(P_{K+1} U_{K+1})\big] & \nonumber\\
	\leq &\lambda f(\mathbf{s_1}) + (1-\lambda)f(\mathbf{s_2}) &
\end{align*}
\end{proof}
\noindent \textbf{Case 3:} $X_{K}$ and $X_{K+1}$ are different directions, with $|X_{K}| < |X_{K+1}|$. 
\begin{proof}
In such condition, $X_{K+1}$ may not easily seperated out from previous SoC. It further contains three cases.
\begin{itemize}
	\item $X_{K-1}$ and $X_{K}$ are the same direction. We could use the same ``trick" in Case 1 to combine step $K-1$ and $K$. Proof is trivial for this case.
	\item $X_{K-1}$ and $X_{K}$ are diffrent directions, $X_{K}$ and $X_{K+1}$ form a cycle that is \emph{separate} from the rest of the signal (eg. it is the deepest cycle). We can separate $X_{K+1}$ out, and proof will be similar to Case 2. 
	\item $X_{K-1}$ and $X_{K}$ are diffrent directions, $X_{K}$ and $X_{K+1}$ do not form a separate cycle. This condition is the most complicated case, since it's hard to move $X_{K+1}$ to the previous step, or seperate it out. For example, in Fig. \ref{fig_append_induction}, $\delta_{K+1}$ was counted as a part of the previous charging cycle while $\Delta_{K+1}$ forms a closed full cycle together with $X_{K}$ by Rainflow counting algorithm. Therefore, we need to look into $Q_K$, $Q_{K+1}$, $P_K$, $P_{K+1}$ in order to show the $K+1$ step convexity. 
	\begin{figure}[ht]
		\centering
		\includegraphics[width= 2.5 in]{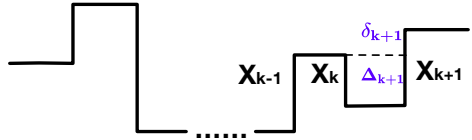}
		\caption{$X_{K}$ and $X_{K+1}$ are different directions, with $|X_{K}| < |X_{K+1}|$}
		\label{fig_append_induction}
	\end{figure}
	
	There are further four sub-cases (Fig. \ref{fig_append_4case}), for simplicity we only consider the cost of charging cycles. Showing convexity for each sub-case finishes the overall convexity proof. 
	\begin{figure}[ht]
	\centering
	\includegraphics[width= 0.6\columnwidth, height = 0.6\columnwidth]{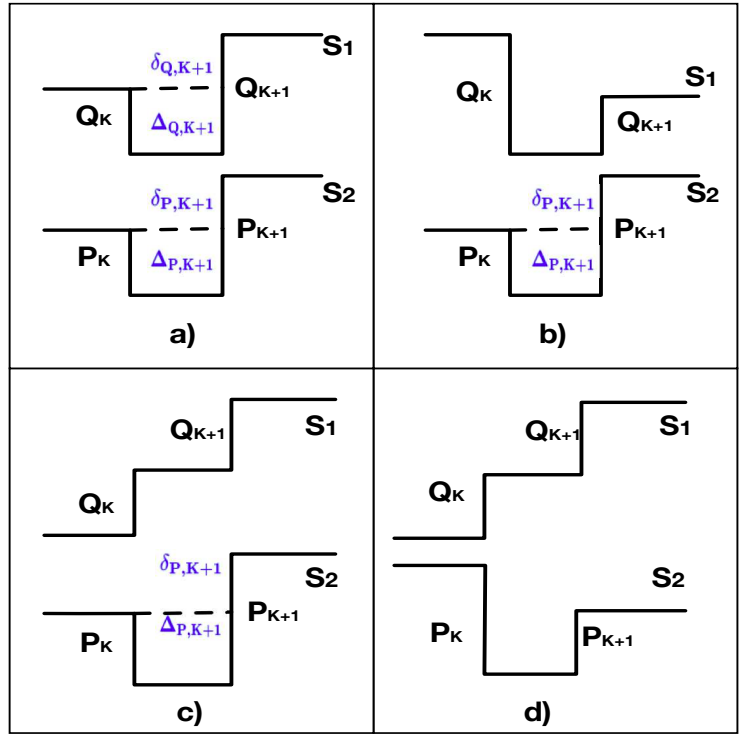}
	\caption{Four cases of $Q_K$, $Q_{K+1}$, $P_K$, $P_{P+1}$}
	\label{fig_append_4case}
	\end{figure}
	    	    
	    Case a) 
	    \begin{align*}
	    & f(\lambda \bd s_1 + (1-\lambda) \bd s_2) \nonumber\\
	    = & f\left(\lambda \bd s_1^{K} + (1-\lambda) \bd s_2^{K} + X_{K+1}U_{K+1}\right) \nonumber\\
	    = & f(\lambda \bd s_1^{K} + (1-\lambda)\bd s_2^{K} + X_{K+1}U_{K}) + \Phi(\Delta_{K+1}) \nonumber\\
	    \leq & \lambda f\big(\bd s_1^{K}+Q_{K+1}U_{K}\big) + (1-\lambda) f\big(\bd s_2^{K}+P_{K+1}U_{K}\big) \nonumber\\
	    & + \Phi\big(\lambda \Delta_{Q,K+1}+(1-\lambda) \Delta_{P,K+1}\big) \nonumber\\
	    \leq & \lambda \big[f(\bd s_1^{K}+Q_{K+1}U_{K})+\Phi(\Delta_{Q,K+1})\big] \nonumber\\
	    & + (1-\lambda) \big[f(\bd s_2^{K}+P_{K+1}U_{K})+\Phi(\Delta_{P,K+1})\big] \nonumber\\
	    = & \lambda f(\bd s_1) + (1-\lambda) f(\bd s_2)
	    \end{align*}	
	    
	    Case c) 
	    \begin{align*}
	    & f(\lambda \bd s_1 + (1-\lambda) \bd s_2) \nonumber\\
	    = & f(\lambda \bd s_1^{K} + (1-\lambda) \bd s_2^{K} + X_{K+1}U_{K+1}) \nonumber\\
	    = & f(\lambda \bd s_1^{K} + (1-\lambda)\bd s_2^{K} + X_{K+1}U_{K}) + \Phi(\Delta_{K+1}) \nonumber\\
	    \leq & \lambda f\big(\bd s_1^{K}+Q_{K+1}U_{K}\big) + (1-\lambda) f\big(\bd s_2^{K}+P_{K+1}U_{K}\big) + \Phi(\Delta_{K+1}) \nonumber\\
	    \leq & \lambda f\big(\bd s_1^{K}+Q_{K+1}U_{K}\big) + (1-\lambda) f\big(\bd s_2^{K}+P_{K+1}U_{K}\big) \nonumber\\
	    & + \Phi\big[(1-\lambda) \Delta_{P,K+1}-\lambda Q_{K+1}\big] \nonumber\\
	    \leq & \lambda f\big(\bd s_1^{K}+Q_{K+1}U_{K}\big) + (1-\lambda) f\big(\bd s_2^{K}+P_{K+1}U_{K}\big) \nonumber\\
	    & + \Phi\big[(1-\lambda) \Delta_{P, K+1}\big] \nonumber\\
	    \leq & \lambda f(\bd s_1) + (1-\lambda) f(\bd s_2)
	    \end{align*}	

	    Then we try to finish the proof for case b) and d). First, note that b) implies d).  To show this, for case d), define $\hat{\bd s_1} = \sum_{t=1}^{K+1} \hat{Q_t} U_t$ as a modified version of $\bd s_1$, where $\hat{Q_t} = Q_t$ for $t=1,...,K-1$, $\hat{Q}_{K} = 0$, $\hat{Q}_{K+1} = Q_{K}+Q_{K+1}$. We have $f(\hat{\bd s}_1) = f(\bd s_1)$. We also have $f(\lambda \hat{\bd s}_1+(1-\lambda)\bd s_2) \geq f(\lambda \bd s_1+(1-\lambda)\bd s_2)$ because of the decreasing signal at $K$ for $\bd s_2$. Thus,
	    \begin{align*}
	    f(\lambda \bd s_1 + (1-\lambda)\bd s_2) &\leq f(\lambda \hat{\bd s}_1+(1-\lambda)\bd s_2) \nonumber\\
	   &  \stackrel{i)}\leq \lambda f(\hat{\bd s_1}) + (1-\lambda)f(\bd s_2) \nonumber\\
	   & = \lambda f(\bd s_1) +(1-\lambda)f(\bd s_2)
	    \end{align*}
    
    where i) follows from assuming b) is true and letting $\Delta_{P,K+1} =0$ and reversing the label of P and Q. Therefore we only need to prove case b). 
    
    Case b) contains two different circumstances in terms of $P_{K}$ and $P_{K+1}$. We need the following proposition for the two cases. To not use too many negative signs, we denote $\bar{Q}_t = -Q_t$, $\bar{P}_t = -P_t$.
	 
	 \begin{proposition}
	 	Let $g$ be a convex increasing function and given numbers $a > b >0$. Then $g(a)+g(b) \geq g(a-\delta) + g(b+\delta)$ if $\delta >0$ and $b+\delta < a$,
	 \end{proposition}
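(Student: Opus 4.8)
The plan is to view this as the simplest instance of Karamata's majorization inequality: since $a+b = (a-\delta)+(b+\delta)$ and $\max\{a,b\}=a \ge a-\delta = \max\{a-\delta,\,b+\delta\}$ (the last equality using $\delta>0$ and $b+\delta<a$), the pair $(a,b)$ majorizes $(a-\delta,\,b+\delta)$, so convexity of $g$ gives $g(a)+g(b)\ge g(a-\delta)+g(b+\delta)$ at once. Because the paper builds everything from elementary convexity facts, though, I would instead record the short self-contained slope argument below.

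First I would rewrite the desired inequality as a comparison of two increments,
\[
\big[g(a)+g(b)\big]-\big[g(a-\delta)+g(b+\delta)\big] = \big[g(a)-g(a-\delta)\big]-\big[g(b+\delta)-g(b)\big],
\]
so that it suffices to prove $g(a)-g(a-\delta)\ge g(b+\delta)-g(b)$. These are increments of $g$ over two intervals of equal length $\delta$, namely $[a-\delta,\,a]$ and $[b,\,b+\delta]$. From $b+\delta<a$ I obtain $\delta<a-b$, hence $a-\delta>b$; combined with $a>b+\delta$ this shows that $[a-\delta,\,a]$ is $[b,\,b+\delta]$ shifted by the strictly positive amount $a-\delta-b>0$.

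Next I would invoke the monotonicity of difference quotients of a convex function: if $x_1<x_2$, $y_1<y_2$, $x_1\le y_1$ and $x_2\le y_2$, then $\dfrac{g(x_2)-g(x_1)}{x_2-x_1}\le\dfrac{g(y_2)-g(y_1)}{y_2-y_1}$. Applying this with $x_1=b$, $x_2=b+\delta$, $y_1=a-\delta$, $y_2=a$ — all four hypotheses hold by the previous paragraph — and cancelling the common denominator $\delta$ yields $g(b+\delta)-g(b)\le g(a)-g(a-\delta)$, which is exactly the needed inequality. If one prefers not to quote this lemma, the same step follows for differentiable $g$ by writing each increment as $\int g'$, substituting $t\mapsto t+(a-\delta-b)$ in the integral over $[b,\,b+\delta]$, and using that $g'$ is nondecreasing while the shift is positive.

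I do not expect a genuine obstacle here. The only points requiring care are deriving $a-\delta>b$ from the hypothesis $b+\delta<a$ (the weaker $a>b$ alone would not suffice), and applying the slope lemma to correctly ordered endpoints; note also that the ``increasing'' hypothesis on $g$ is not actually used in the argument — convexity alone delivers the conclusion.
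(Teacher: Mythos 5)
Your proposal is correct and follows essentially the same route as the paper: both reduce the claim to the increment comparison $g(a)-g(a-\delta)\ge g(b+\delta)-g(b)$ and derive it from the fact that a convex function's increments over equal-length intervals grow as the interval moves right. The only difference is cosmetic --- the paper establishes this by differentiating $h(x)=g(x)-g(x-\delta)$ (implicitly assuming $g$ is differentiable), whereas your difference-quotient argument avoids that assumption; your observations that the monotonicity of $g$ is never used and that $a-\delta>b$ must be derived from $b+\delta<a$ are both accurate.
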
	  
	 
	 \begin{proof}
	 	Define $h(x) = g(x)-g(x-\delta)$ and $x, x-\delta \geq 0$. We have
	 	$$h^{'}(x) = g^{'}(x)-g^{'}(x-\delta) \geq 0\,,$$
	 	because $g$ is convex and $x > x-\delta$. Therefore, $h(\cdot)$ is an increasing function, $\forall a > b+\delta$,
	 	\begin{align*}
	 	h(a) & \geq h(b+\delta)\\
	 	g(a) - g(a-\delta) &\geq g(b+\delta) - g(b)
	 	\end{align*}
	 	Moving $g(b)$ to the left side and $g(a-\delta)$ to the right side of the inequation finish the proof.
	 \end{proof}
	 
	 Case b) \textcircled{1} $P_{K}$, $P_{K+1}$ do not form a cycle that is separate from the rest of $\bd S_2$. 
	 \begin{align*}
	  & f(\lambda \bd s_1 + (1-\lambda)\bd s_2) \nonumber\\
	  = & f(\lambda \bd s_1^{K} + (1-\lambda)\bd s_2^{K} + X_{K+1}U_{K}) + \Phi(\Delta_{K+1}) \nonumber\\
	  = & f(\lambda \bd s_1^{K} + (1-\lambda) \bd s_2^{K} + \lambda Q_{K+1}U_{K} + (1-\lambda) P_{K+1}U_{K}) + f(\lambda \bar{Q}_{K}U_{K+1} + (1-\lambda)  \bar{P}_{K}U_{K+1}) \nonumber\\
	  = & f\left(\lambda \bd s_1^{K} + \lambda \bar{Q}_{K}U_{K} - \lambda \bar{Q}_{K}U_{K} + (1-\lambda) \bd s_2^{K} + \lambda Q_{K+1} U_{K}  + (1-\lambda) P_{K+1}U_{K}\right) \nonumber\\
	  & +f\left(\lambda Q_{K+1}U_{K+1} - \lambda Q_{K+1}U_{K+1} + \lambda \bar{Q}_{K}U_{K+1} + (1-\lambda) \bar{P}_{K}U_{K+1}\right) \nonumber\\
	  = & f\Big(\lambda \bd s_1^{K} + \lambda \bar{Q}_{K}U_{K}  +(1-\lambda)\big(\bd s_2^{K}+ P_{K+1}U_{K} + \frac{\lambda}{1-\lambda}(Q_{K+1}U_{K}- \bar{Q}_{K}U_{K}) \big)\Big) \nonumber\\
	  &+ f \Big(\lambda Q_{K+1}U_{K+1} + (1-\lambda) \big( \bar{P}_{K}U_{K+1}+\frac{\lambda}{1-\lambda}(\bar{Q}_{K}U_{K+1}- Q_{K+1}U_{K+1} )\big)\Big) \nonumber\\
	  \leq & \lambda f(\bd s_1^{K}+\bar{Q}_{K}U_{K}) + \lambda f(Q_{K+1}U_{K+1} ) \nonumber\\
	  &+(1-\lambda)f\big(\bd s_2^{K}+\big(P_{K+1}-\frac{\lambda}{1-\lambda}(\bar{Q}_{K}-Q_{K+1})\big)U_{K}\big)\nonumber\\
	  &+(1-\lambda)f\big(\big(\bar{P}_{K}+\frac{\lambda}{1-\lambda}(\bar{Q}_{K}-Q_{K+1})\big)U_{K+1}\big) 
	  \end{align*}
	  
	  Only considering charging cycles, the first line is the cost of $\lambda f(\bd s_1)$. Now we show, 
	  \begin{align*}
	  f\big(\bd s_2^{K}+\big(P_{K+1}-\frac{\lambda}{1-\lambda}(\bar{Q}_{K}-Q_{K+1})\big)U_{K}\big)\nonumber\\
	  +f\big(\big(\bar{P}_{K}+\frac{\lambda}{1-\lambda}(\bar{Q}_{K}-Q_{K+1})\big)U_{K+1}\big) \leq f(\bd s_2)\,,
	  \end{align*}
	  
	  We can write out the cost of charging cycles in $\bd s_2$ as $f(\bd s_2)$,
	  
	  \begin{equation*}
	  f(\bd s_2) = \sum_{i}^{N-1} \Phi(Z_i) + \Phi(\bar{P}_{K})\,,
	  \end{equation*}
	  
	  where $Z_{N-1}$ is the cycle that $\bd s_2(K+1)$ is assigned to. 
	  
	  By assumption that $P_{K}$ and $P_{K+1}$ do not form a separate cycle, so that $Z_{N-1} \geq P_{K+1}$. By assumption, $\frac{\lambda}{1-\lambda}(\bar{Q}_{K}-Q_{K+1}) > 0$. Let  $\delta=\frac{\lambda}{1-\lambda}(\bar{Q}_{K}-Q_{K+1})$, and since $P_{K+1}+Q_{K+1} \geq \bar{P}_{K}+\bar{Q}_{K}$ in case b), $\bar{P}_{K}+\delta \leq P_{K+1} \leq Z_{N-1}$. Therefore applying Proposition 8, $a = Z_{N-1}$ and $b = \bar{P}_{K}$, $\delta=\frac{\lambda}{1-\lambda}(\bar{Q}_{K}-Q_{K+1})$, we have the desired result. 

	  Case b) \textcircled{2} The other case is that $P_{K}$, $P_{K+1}$ form a cycle that is separate from the rest of $\bd s_2$. Similar as \textcircled{1}, we need to show
	  \begin{align*}
	    f\big(\bd s_2^{K}+\big(P_{K+1}-\frac{\lambda}{1-\lambda}(\bar{Q}_{K}-Q_{K+1})\big)U_{K}\big) \nonumber\\
	    +f\big(\big(\bar{P}_{K}+\frac{\lambda}{1-\lambda}(\bar{Q}_{K}-Q_{K+1})\big)U_{K+1}\big) \leq f(\bd s_2) 
	   \end{align*}
	   
	   Since $P_{K+1} = \bar{P}_{K}+\delta_{P, K+1}$, we re-write the above inequation as,
	   \begin{align*}
	   f\big(\bd s_2^{K}+\big(\bar{P}_{K}+\delta_{P, K+1}-\frac{\lambda}{1-\lambda}(\bar{Q}_{K}-Q_{K+1})\big)U_{K}\big)\nonumber\\ +f\big(\big(P_{K+1}-\delta_{P, K+1} + \frac{\lambda}{1-\lambda}(\bar{Q}_{K}-Q_{K+1})\big)U_{K+1}\big) \leq f(\bd s_2)\,,
	   \end{align*}
	  
	  Denote $\delta = \delta_{P, K+1}-\frac{\lambda}{1-\lambda}(\bar{Q}_{K}-Q_{K+1}) > 0$
	  
	  \begin{equation*}
	  f(\bd s_2) = \sum_{i}^{N-2} \Phi(Z_i) + \Phi (Z_{N-1})+ \Phi(P_{K+1})\,,
	  \end{equation*}
	  
	  $Z_{N-1}$ is the deepest charging cycle with ending SoC equals the $P_{K}$'s starting SoC, and $Z_{N-1} \leq P_{K+1}$ since $P_{K+1}$ forms a separate cycle. Applying Proposition 8 by setting $a = P_{K+1}$, $b = Z_{N-1}$, we have the desired results.
	  \end{itemize}
\end{proof}
\end{document}